\documentclass[11pt, a4paper]{article}

\RequirePackage[OT1]{fontenc}
\RequirePackage{amsthm,amsmath}
\RequirePackage[numbers]{natbib}
\usepackage{changepage}
\usepackage[english]{babel}
\usepackage[pdftex]{graphicx}
\usepackage{float}
\usepackage{amsfonts}
\usepackage{enumerate}
\usepackage{amsmath}
\usepackage{comment}
\usepackage{authblk}
\usepackage{mathtools}
\mathtoolsset{showonlyrefs}			%numera solo le equazioni richiamate con eqref
\numberwithin{equation}{section}
\theoremstyle{plain}
\newtheorem{thm}{Theorem}[section]

\newtheorem{prop}[thm]{Proposition}

\newtheorem{lem}[thm]{Lemma}
\theoremstyle{definition}
\newtheorem{dfn}[thm]{Definition}

\theoremstyle{remark}
\newtheorem{rem}[thm]{Remark}

\newcommand{\probSpace}{(\Omega,\mathcal{F},\{\mathcal{F}_{t}\}_{t\geq 0},\mathbb{P})}
\def\R{\mathbb R}
\def\N{\mathbb N}
\def\Z{\mathbb Z}

\def\P{\mathbb P}

\def\1{\mathds{1}}
\def\ra{\rightarrow}

\def\ve{\varepsilon}

\def\s{\sigma}
\def\t{\theta}

\def\T{\Theta}
\def\piq{\frac{\pi}{4}}

\def\pim{\frac{\pi}{2}}
\def\pitm{\frac{3\pi}{2}}

\usepackage{geometry}
\usepackage{todonotes}

\title{Strong Solutions to SDEs with Supercritical Drift arising in Navigation Models}
\author[1]{Carlo Ciccarella}

\affil[1]{Institut de Mathématiques, Ecole Polytechnique Fédérale de Lausanne, Station 8, CH-1015, Lausanne, Switzerland}

\begin{document}

  \maketitle

\begin{abstract}

We prove strong existence and pathwise uniqueness for two stochastic models of a
seeker steering toward a target, written in polar coordinates. In both, the angular drift carries a $\frac1{r}$-type singularity which belongs to the supercritical regime in $\R^2$.
Standard results for SDEs with singular
drift therefore do not apply, and we give a new proof of strong well-posedness  based on a pathwise argument. 

The two models arise from sailboat navigation and proportional navigation.
We study the limiting regime in which the stopping radius around the target tends to
zero and prove that, despite the singularity at the origin, each system admits a
unique strong solution up to the hitting time of the target. 

These results provide an example of strong well-posedness in a regime where the
general theory does not apply.
\end{abstract}

%We extend a previously studied optimal control problem for a sailboat aiming to reach an upwind buoy, originally formulated with the target defined as a ball of radius $\eta > 0$, to the case where the target is a single point located at the origin. The control problem is considered under the assumption of zero  tacking cost. 
%The additional difficulty is that the process controlled by the guessed optimal strategy $\sa$, which it is expressed in polar coordinates, has a drift that is unbounded and not integrable in the Lebesgue sense near the origin. Moreover, $\sa$ may involve an infinite number of tacks in any compact set, making  the drift  not even continuous. In particular,  it belongs to the  supercritical space $L^q([0,T], L^p_{loc}(\R^2))$ with $q <1$, resulting in $\frac{d}{p} + \frac{2}{q} >1$, where $d=2$, since the boat moves in $\R^2$.   Standard results on the existence and uniqueness of  strong solutions do not apply,  so we provide a novel proof in this degenerate case. 

\medskip
\noindent{\bf Mathematics Subject Classification 2020:}  Primary 60H10, 60H17, 60H30; Secondary 90B99

\noindent{\bf Keywords:} singular SDE; supercritical drift; navigation models.

%\begin{here}%[class=MSC]
%\kwd[Primary ]{49L12, 60H10, 93B52, 93E20}
%\kwd{37A50}
%\kwd[; secondary ]{37A50}
%{\bf Key words and phrases:} stochastic control; verification theorem;

%\end{here}

%\tableofcontents

\section{Introduction}

We prove strong well-posedness to stochastic differential equations with supercritical drift naturally arising in navigation   models. 

The general framework is a seeker steering toward a target in a noisy environment, where the noise affects the angular coordinate. We treat two instances:  the \textit{sailboat navigation}  \cite{CDV2025, CC, DalangDumasEtAl2015, Vinckenbosch2012} and \textit{proportional navigation} models, the latter widely used in missile guidance  \cite{Berglund2001, shneydor1998, zarchan2012}. 

In $\R^2$, with the target at the origin, the dynamics in polar coordinates are described by a system of stochastic differential
equations whose angular drift carries a
$\frac 1r$-type singularity.

Let $\probSpace$ be a filtered probability space supporting a standard Brownian
motion $(B_t)_{t\geq0}$. The models we consider are described by a system of the form
\begin{equation}\label{state}
\begin{cases}
dR_t = \mu_1(R_t, \Theta_t)\, dt, & R_0 = r,\\
d\T_t = \mu_2(R_t, \T_t)\, dt + \sigma\, dB_t, & \T_0 = \t,
\end{cases}
\end{equation}
where the angular drift has the form $\mu_2(r,\theta) = \frac{f(\theta)}{r}$  for a
bounded, piecewise-smooth $f$ depending on the model (see Section~\ref{motiv}).
We refer the reader to Section~\ref{motiv} for a formulation and discussion of the models we consider together with the explanation of why the noise only affects the angular component $\T_t$.

Our main results, Theorem~\ref{Solution_sail} and Theorem~\ref{Solution_pn},
establish strong existence and pathwise uniqueness for the two models up to
the hitting time $\tau$ of the target, where the solution is understood up
to and including $\tau$: the equations hold in the usual strong sense on $[0,t]$
for every $t<\tau$, and the trajectory extends continuously to its almost sure
limit at $\tau$ (the precise notion is Definition~\ref{def:sol_tau}).

Our contribution is  twofold. First, we develop a pathwise method that yields strong existence and pathwise uniqueness in a supercritical regime of type $1/r$ where standard results do not apply. The method in fact covers a broader class of drift and in particular those with $\mu_2(r,\t) =  \frac{f(\t)}{r^\gamma}$, for any  $\gamma \geq 1$ (see Remark \ref{rem_gamma}). 
 Second, we give the first
rigorous treatment of both navigation models as stochastic differential equations, well-posed up to the
hitting time of the target.

Two features place \eqref{state} outside the classical theory. Near the origin
the drift becomes singular, and the diffusion is degenerate, since noise acts
only on the angular component. Classical existence and uniqueness hold under
uniform Lipschitz continuity of the coefficients, and---more
generally---\cite{veretennikov1981strong} established strong well-posedness for
measurable drift of linear growth with nondegenerate Lipschitz diffusion.
Neither covers \eqref{state}.

The singularity alone already lies beyond the standard theory of singular-drift
SDEs. Recall the \textit{subcritical} criterion of Krylov--R\"ockner
\cite{KrylovRockner2005}: writing $L^q([0,T];L^p(\mathbb R^d))$ for the usual
space with norm
\[
\|f\|_{L^q_tL^p_x}
:=
\left(
\int_0^T
\left(
\int_\Omega |f(t,x)|^p\,dx
\right)^{q/p}
dt
\right)^{1/q},
\]
a drift $b\in L^q([0,T];L^p(\mathbb R^d))$ is subcritical in dimension $d=2$
when $\frac{2}{p}+\frac{2}{q}<1$. But in polar coordinates
$dx = r\,dr\,d\theta$, so
\[
\int_0^1\int_0^{2\pi}
\left|\frac{1}{r}\right|^p
r\,dr\,d\theta
=
2\pi\int_0^1 r^{1-p}\,dr
<\infty
\quad\Longleftrightarrow\quad
p< 2,
\]
forcing $2/p>1$ and hence $\frac{2}{p}+\frac{2}{q}>1$ for every
$q\in[1,\infty]$. A $1/r$-type drift in dimension two is therefore
supercritical. Well-posedness in the subcritical regime with nondegenerate
diffusion is due to \cite{KrylovRockner2005}, with a local version in
\cite{Zhang2011}; the critical case $\frac{d}{p}+\frac{2}{q}=1$ was settled in
\cite{nam2018}.

The supercritical regime is substantially harder, and strong well-posedness can
 fail (see \cite[Section~7.4]{Flandoli2019} for a counterexample). The
recent progress here concerns \emph{weak} solutions, and---crucially---relies
on structural or nondegeneracy hypotheses that \eqref{state} does not satisfy.
\cite{HaoZhang2024} obtain weak well-posedness for divergence-free
distributional drifts under nondegenerate noise; our drift is neither
divergence-free nor paired with nondegenerate noise. \cite{SongXie2023} treat
supercritical drifts for $\alpha$-stable SDEs with $\alpha\in\,]0,1]$ and
nondegenerate noise, and \cite{ChaudruDeRaynalMenozzi2022} for
$\alpha\in\,]1,2]$ with Besov drift; the $1/r$ singularity on $\mathbb R^2$ is
not even compatible with their \textit{good condition} at the Brownian endpoint
$\alpha=2$. %In short, the existing supercritical results address weak solutions

On the modeling side, the sailboat system originates in the stochastic control
problem of \cite{CDV2025}, where strong well-posedness was proved until the hitting time of a ball of radius $\eta >0$ around the target. The present work closes the gap by establishing  strong existence
and pathwise uniqueness until the hitting time of the origin i.e. $\eta = 0$. 

The deterministic kinematics of
proportional navigation, including the reduction to a planar system in range and
line-of-sight angle are well known
\cite{shneydor1998,Berglund2001}. To our knowledge, the
Brownian-driven counterpart of these kinematics has not been analyzed
as a stochastic differential equation, and its strong well-posedness is new.

%Our contribution is thus twofold. First, we develop a pathwise method that yields strong existence and pathwise uniqueness in a supercritical regime of type $1/r$ where standard results do not apply. Second, we give the first rigorous treatment of both navigation models as stochastic differential equations, well-posed up to the hitting time of the target.

%\begin{rem}
%Observe that our results hold with the more general in the models we present below, $\gamma = 1$. Nonetheless,  we treat the general case $\gamma \geq 1$ both  because it signifies a consistent departure from the subcritical regime and our methodology could be applied to handle more general types of potential that may appear in physical applications like the gravitational one ($ 1/r^2$).
%
%\end{rem}

%
%
%In  \cite{ling2019} it is proven that,  in the case of unbounded but subcritical drift, that is $b \in L^q([0,T], L^p(\shq))$ with $\frac{d}{p} + \frac{2}{q} <1$, and non-degenerate diffusion, a strong solution exists and is unique.  A local version, that is for $b \in L^q([0,T], L^p_{loc}(\R^d))$ was obtained in \cite{Zhang2011}. Observe that in \cite{ling2019} $\shq$ is allowed to be a subset of $\R^d$.
%In the current situation, since we are in dimension $2$, $d=2$.
%
%
%In the case of critical drift, that is, $\frac{d}{p} + \frac{2}{q} =1$, \cite{nam2018} proves the existence and uniqueness of a strong solution.
%In general, with supercritical drift, the problem is ill-posed, and  \cite[Sec. 7.4]{Flandoli2019} provides a example where a solution does not exist. 

The paper is organized as follows. In Section 2, we introduce the two motivating models and discuss their similarities and differences. We also recall an existence and uniqueness result for strong solutions of SDEs with degenerate noise and discontinuous coefficients. This result yields well-posedness for \eqref{state} up to the hitting time of a ball of radius \(\eta>0\) centered at the origin. In Section 3, we prove the first main result of the paper: existence and uniqueness of a strong solution to \eqref{state} up to the origin, corresponding to the limiting case \(\eta=0\), with coefficients arising from the sailboat trajectory model \eqref{eq_sail}. Finally, in Section 4, we establish the analogous result for the coefficients associated with the proportional navigation model \eqref{eq_pn}.

%
%The paper is organized as follows. In Section 2, we introduce the two motivating models and discuss the similarities and differences between the corresponding stochastic systems. In Section 3, we recall an existence and uniqueness result for strong solutions of SDEs with degenerate noise and discontinuous coefficients. This result yields well-posedness for \eqref{state} up to the hitting time of a ball of radius \(\eta>0\) centered at the origin. In Section 4, we prove the first main result of the paper: existence and uniqueness of a strong solution to \eqref{state} up to the origin, corresponding to the limiting case \(\eta=0\), for the coefficients arising from the Sailboat Trajectory problem \eqref{eq_sail}. Finally, in Section 5, we establish the analogous result for the coefficients associated with the Proportional Navigation problem \eqref{eq_pn}.

\section{Models}\label{motiv}
 
We present below the two models under consideration in which the system \eqref{state} naturally arises.

The first model stems directly from the missile guidance problem, where one of the most common methods to steer a missile toward a target is so-called \emph{proportional navigation}. 
 Since the pursuer travels at high speed, abrupt changes in its direction are not feasible; however, unlike in the second model, it can point directly at the target.

The second model describes a sailboat attempting to reach a target buoy. The main constraint here is that the sailboat cannot sail directly into the wind—a region known as the \emph{no-go zone}. Consequently, if the target lies upwind, a zig-zag trajectory becomes necessary. Only when the wind rotates sufficiently, so that its direction forms a wide enough angle with the line connecting the boat to the buoy,  the boat can head directly toward its target.

In both models, noise enters the dynamics and may originate from wind fluctuations or, more generally, from measurement errors in the pursuer's direction. However, it affects the equations of motion in distinct ways.  In the proportional navigation problem, noise in $\T$
perturbs the angle between the pursuer's direction and the line-of-sight to the target, thereby requiring continuous trajectory adjustments. In the sailboat navigation problem, by contrast, noise rotates the no-go zone, causing the boat's best heading to deviate either closer to or farther from the buoy.

The fact that the noise acts only on the angular component, for the \textit{sailboat navigation} problem, is justified by the fact that, during the length of a sailing regatta, oscillations in the wind direction are more pronounced than oscillations in the wind speed, at least on certain time scales  (see \cite{DalangDumasEtAl2015}). Concerning the \textit{missile guidance} problem, likewise, given the high speed of a missile, disturbances generated by wind fluctuations are affecting mostly its direction.

\subsection{Proportional Navigation}\label{ss1}
 
Equations of the form~\eqref{state} arise in the kinematic modeling
of pursuit-evasion and missile guidance problems see~\cite{shneydor1998, zarchan2012, lin1991}.

The planar
engagement between a pursuer and a motionless target is described
by two scalar quantities: the distance $r$ from the pursuer to the
target, and the \textit{lead angle} $\theta$, defined as the angle
between the pursuer's velocity vector and the \textit{line of sight}. The \textit{line of sight} (LOS) is
the segment connecting the pursuer to the target; the lead angle
$\theta = 0$ corresponds to the pursuer pointing directly at the
target (head-on), and $\theta = \pm\pi$ to the pursuer pointing
directly away from it.

As discussed in \cite{Berglund2001}, proportional navigation is inspired by classical sailboat navigation: the conceptual idea is that the missile's commanded rotation rate ($d \T$) should be proportional to the  line of sight rotation rate.
The proportionality constant is named \textit{navigation constant} and denoted by $N$ below. 

The relative kinematics under the proportional navigation law are derived for example  in \cite[Eq.~(5.6)]{shneydor1998} and reduce to an autonomous system in $(r, \t)$, where $r$ is the range and $\t$ is the lead angle (in \cite{shneydor1998} notations, $\t = \delta$):
\begin{equation}\label{eq_pn0}
 \mu_1(\t)= 
-v \cos(\theta),
\quad \mu_2(r,\t)= 
 -(N-1)\frac{v \sin(\theta)}{r},
\end{equation}
where $N\geq 2$ is the navigation constant.%, that is the  proportionality constant  between the missile's commanded rotation rate (or lateral acceleration) and the measured Line-Of-Sight rotation rate.

If $N=2$, this boils down to a constant acceleration, resulting in an arc-of-circle trajectory in absence of noise:  $\frac{r_0}{r} = \frac{\sin(\t_0 + \t)}{\sin \t_0}$.

If $|\Theta_t| > \pim$, the pursuer will move away from the target in the first part of the trajectory, see left hand side of Fig. \ref{fig_drift_pn}. 
For modeling tractability, we assume instead that the pursuer can reverse its thrust on the rear hemisphere $\t \in [\pim, \pitm]$; this amounts to identifying $\t$ with $\t - \pi$ on that range, giving $\pi$-periodic dynamics. In the left hand side of Fig. \ref{fig_drift_pn}, this corresponds to the pursuer starting from  $\t_0 = 120 ^\circ$ (resp. $\t_0 =  150^\circ$) to reverse its thrust to effectively start at $\t_0 = -60 ^\circ$ (resp. $\t_0 = -30^\circ$).
Thus, the coefficients of \eqref{state} become the following:
\begin{equation}\label{eq_pn}
 \mu_1(\t)= \left\{\begin{array}{l}
-v \cos(\theta),\\ [1ex]
v \cos(\theta), 
\end{array}
\right.
\quad \mu_2(r,\t)= \left\{\begin{array}{cl}
 -(N-1)\dfrac{v \sin(\theta)}{r}, &\qquad \theta \in [-\pim, \pim ],\\[1ex]
 (N-1)\dfrac{v \sin(\theta)}{r}, &\qquad \theta \in ]\pim, \pitm [.
\end{array}
\right.
\end{equation}

In this model, we also consider stochastic effects, modeled here by additive noise on the angular dynamics,
account for sensor measurement errors on the line-of-sight angle and disturbances like strong wind.

\begin{figure}[h]
\centering
\includegraphics[height=5cm, width=5cm,keepaspectratio]{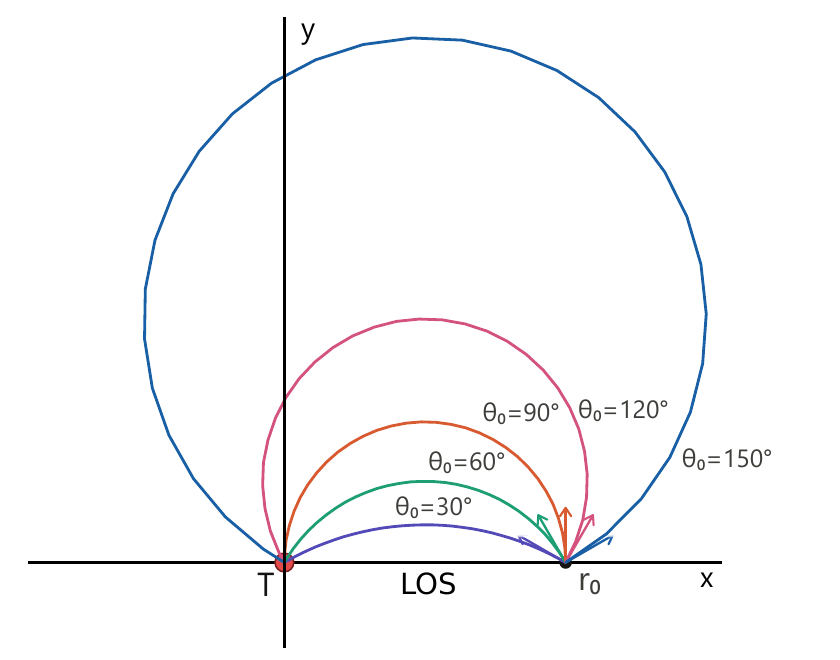}\hspace{1cm}
\includegraphics[height=6cm, width=6cm,keepaspectratio]{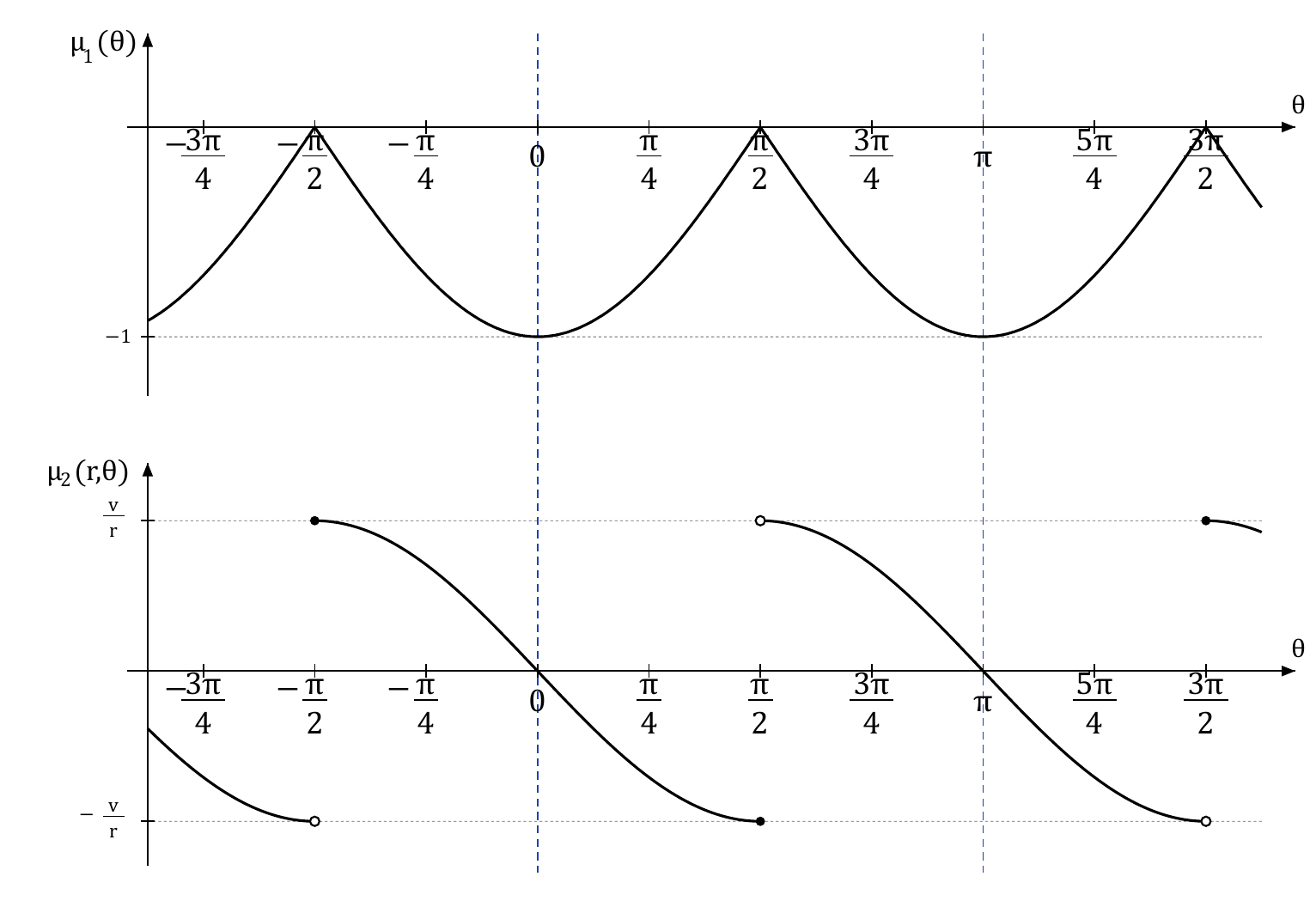}
\caption{Case $N=2$. Left: Trajectories of the pursuer as function of the lead angle $\t$in absence of noise. The line of sight is the segment connecting the target with the pursuer. 
Right: drift $\mu_1$ and $\mu_2$ extended by $2\pi-$periodicity. 
}
\label{fig_drift_pn}
\end{figure}

Extension to a moving target is given in \cite[Eq.~(5.12)]{shneydor1998} and the equations have essentially the same form.

%The first branch is the standard proportional navigation drift; the
%second is active only when the pursuer points into the rear
%hemisphere and rotates $\theta$ toward the front hemisphere, after
%which the standard branch resumes.

\subsection{Sailboat Navigation}\label{ss2}

A second class of problems leading to systems of the form~\eqref{state}
arises in the optimization of sailboat trajectories, as developed in
\cite{CC, CDV2025}; see also
\cite{Vinckenbosch2012,DalangDumasEtAl2015}. In this model, a yacht moves
at constant speed \(v>0\) towards a target buoy, while the wind direction
is noisy and is modeled by a Brownian motion with diffusion coefficient
\(\sigma>0\).

The main constraint is that a sailboat cannot sail directly into the wind, namely inside the so-called \emph{no-go zone} but there has to be a positive angle $\alpha$ (assumed to be $45^\circ$) between its direction and the wind direction. 

We briefly recall the model from \cite{CDV2025}. 
It  assumes a symmetric behavior for upwind and downwind sailing.
For simplicity, the optimal angle is fixed at \(45^\circ\) for an upwind
route and at \(135^\circ\) for a downwind route, the latter meaning that the boat cannot sail with the wind coming directly from the back.  Between these angles,  the yacht can sail  at constant speed $v>0$ in any of the directions within in the butterfly shaded areas in Fig.~\ref{Fig:beating:upwind}.
Each of the two wedges of the butterfly corresponds to the boat sailing on port tack or starboard tack,  depending on whether the wind comes from the left-hand or right-hand side of the boat, respectively. Therefore, for each position and each tack, the direction pointing the closest to the target is chosen, represented by arrows in Fig.~\ref{Fig:beating:upwind}, resulting in two possible choices of direction.

%For each position of the boat and each tack,  the direction which points the closest to the target  is chosen. Therefore for each position of the boat, only two headings are allowed, represented by the arrows in Fig.~\ref{Fig:beating:upwind}, one corresponding to port tack  and the other corresponding to starboard tack.
%The two possible headings are called starboard tack and port tack, depending on whether the wind comes from the right-hand or left-hand side of the boat, respectively.

\begin{figure}[h]
\begin{center}
\includegraphics[height=7cm, width=7cm,
keepaspectratio]{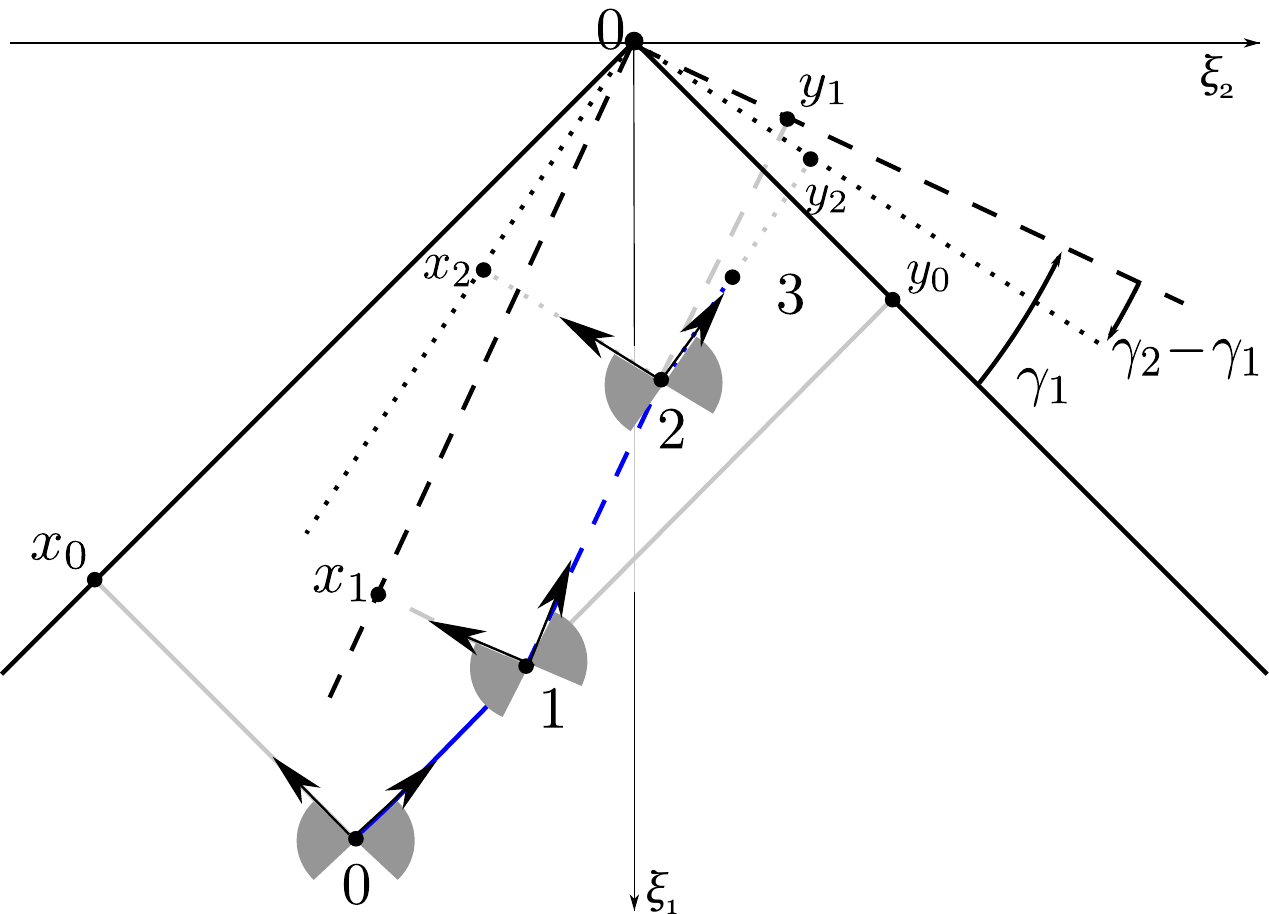}\hspace{1ex}
\includegraphics[height=7cm, width=7cm,
keepaspectratio]{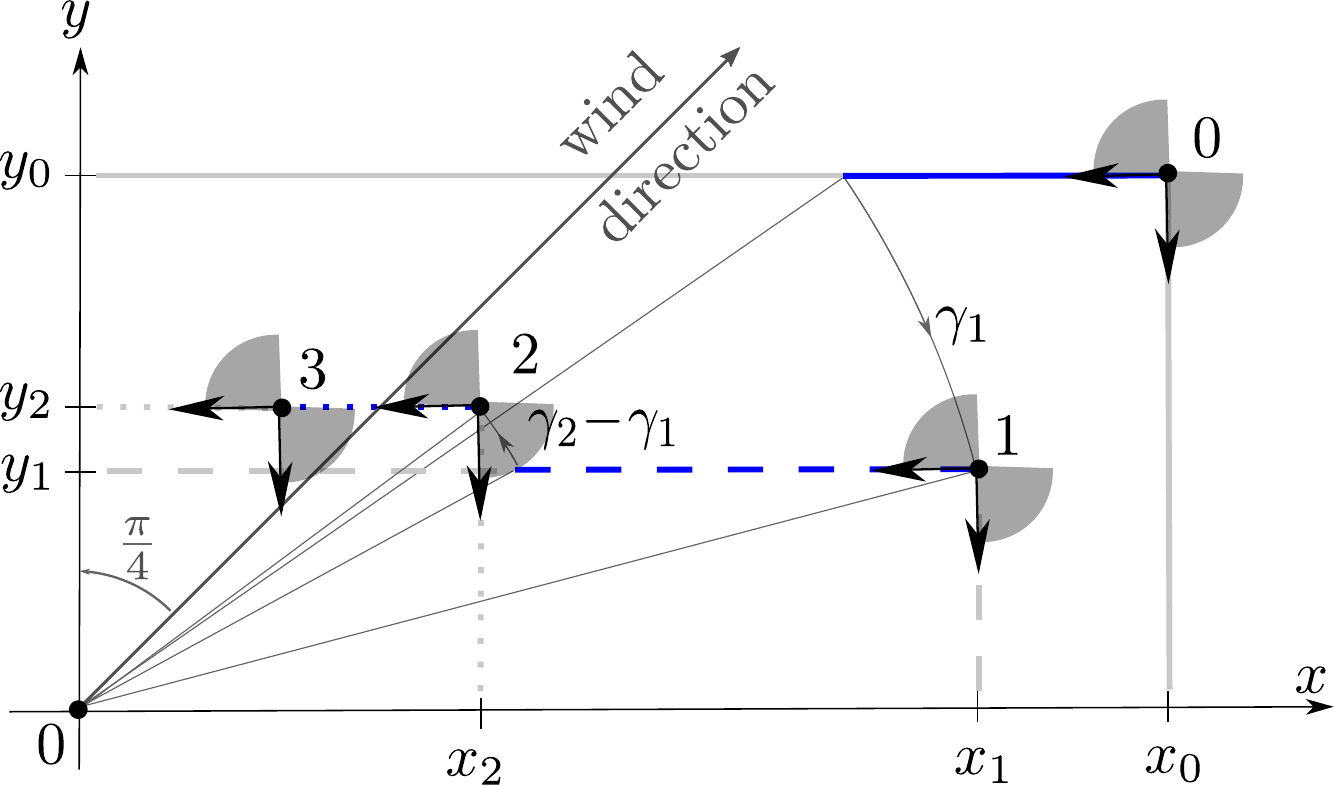}
\caption{The picture on the left-hand side uses the geographic reference frame to show the trajectory of a boat starting at position $0$ and sailing on port tack. The wind direction $\beta_t$ in the geographic frame is given by $\beta_t =  \gamma_1 \, 1_{[\tau_1, \tau_2[}(t) + \gamma_2\, 1_{[\tau_2, \tau_3[}(t)$, where $0 < \tau_1 < \tau_2 < \tau_3$ and $0 < \gamma_2 < \gamma_1 < \piq$. We simplify the notation by using  $x_i := x_{\tau_i}$ and $y_i := y_{\tau_i}$, $i=0, 1, 2$.  The butterfly-shaped regions correspond to the feasible directions when sailing on port tack (resp.~starboard tack). The boat follows the solid line to move from position $0$ to position $1$ between times $0$ and $\tau_1$ while the wind is coming from the North, then, when the wind direction changes to $\beta_{\tau_1} = \gamma_1$, the boat follows the dashed line to move from position $1$ to position $2$ between times $\tau_1$ and $\tau_2$, and when the wind direction changes to $\gamma_2$ (which implies a change of $\gamma_2 - \gamma_1$ from the previous direction), the boat follows the dotted line to move from position $2$ to position $3$ between times $\tau_2$ and $\tau_3$.
The picture on the right-hand side shows the corresponding trajectory of the boat in the rotating reference frame attached to the wind direction. From \cite{CDV2025}.
}
\label{Fig:beating:upwind}
\end{center}
\end{figure}

Following \cite{CDV2025}, we attach the reference frame to the wind
direction rather than to fixed geographic directions. In this rotating
frame, the radial coordinate $r$ denotes the distance from the buoy, while
the angular coordinate $\t = \piq + \zeta$, where $\zeta$  is the angle between the wind
direction and the line of sight connecting the boat to the target. Thus changes in
the wind direction appear as rotations of the boat around the target, see right hand side of Fig. \ref{Fig:beating:upwind}.

Observe the different meaning of the angular coordinate $\t$ with respect to the proportional navigation model where it denotes the lead angle (angle between the pursuer's velocity vector and the line of sight).
%
%In this new reference frame, the motion on starboard (resp. port) tack is shown on the right (resp. left) hand side of Fig.~\ref{Fig:4}.
%
%\begin{figure}
%\begin{center}
%\includegraphics[height=5cm, width=5cm,
%keepaspectratio]{motion_port_tack_simplified_2.pdf}
%\includegraphics[height=5cm, width=5cm,
%keepaspectratio]{motion_starboard_tack_simplified_2.pdf}
%\caption{The sailboat's motion on port tack on the left hand side and on starboard tack on the right hand side. Reference frame is attached to the wind direction. From \cite{CDV1}}.
%\label{Fig:4}
%\end{center}
%\end{figure}

%In \cite{CDV2025}, the reference frame is attached to the buoy and rotates with the wind direction which is always parallel to the vector $[1,1]$ of the new reference frame. Therefore, in this frame, the target is located at the origin and it is the boat that rotates (hence the LOS, the line of sight connecting the boat to the target). Because of the no-go-zone, the boat's angle of direction has to be $>45^ \circ$ or $<-45^\circ$ with the respect to the wind direction ($[1,1]$). Hence, at most, the boat can proceed either vertically or horizontally towards the target (see top right quadrant of Figure \ref{fig_drift}).

In \cite[Sec. 5]{CDV2025} it is considered the  feedback approach that selects, at any given position in the race field, the tack that maximizes the projection of the boat's speed along the radial direction. Thus, picking the best heading at each sailing angle from right hand side of Fig. \ref{Fig:beating:upwind}, we obtain the left hand side of Fig. \ref{fig_drift}.

\begin{figure}[h]
\centering
\includegraphics[height=4cm, width=4cm,keepaspectratio]{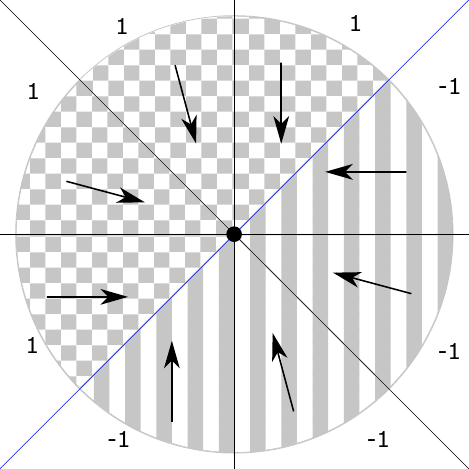}\hspace{1cm}
\includegraphics[height=6cm, width=6cm,keepaspectratio]{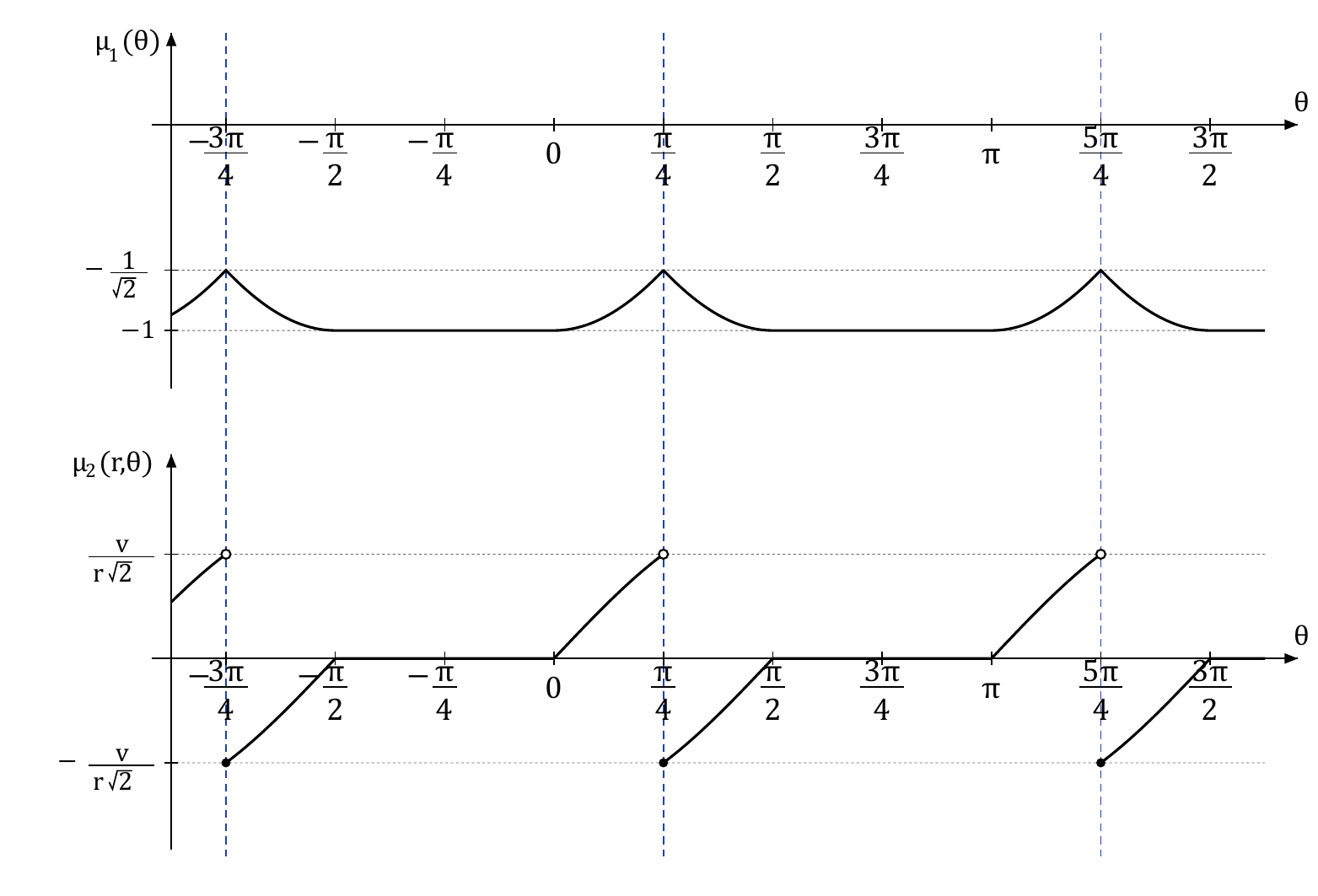}
\caption{Left: direction of the drift under the rotating reference frame under the equation of motion \eqref{state} with coefficients \eqref{eq_sail}. The wind direction is parallel to the vector $[1,1]$. The outer numbers $+1$ (resp. $-1$) indicate whether the wind hits the boat from the right (resp. left).
Right: drift $\mu_1$ and $\mu_2$ extended by $2\pi-$periodicity. 
From \cite{CDV2025}.}
\label{fig_drift}
\end{figure}

The equations of motion in polar coordinates $(R_t, \Theta_t)$ are derived in Section 2 of \cite{CDV2025} and is in the form of a system of stochastic differential equations of exactly the form \eqref{state} with the following coefficients:
\begin{equation}\label{eq_sail}
 \mu_1(\t)= \left\{\begin{array}{l}
-v \cos(\theta),
\\ -v \sin(\theta), 
\\
-v, 
\\
\ \ v\cos(\theta),
\\ v \sin(\theta),
\\ -v,
\end{array}
\right.
\quad \mu_2(r,\t)= \left\{\begin{array}{cl}
 \frac{v \sin(\theta)}{r}, &\qquad \theta \in [0, \frac{\pi}{4} [,\\
- \frac{v \cos(\theta)}{r}, &\qquad \theta \in [\frac{\pi}{4}, \frac{\pi}{2} [,\\
0, &\qquad \theta \in [\frac{\pi}{2}, \pi [,\\
- \frac{v\sin(\theta)}{r}, &\qquad \theta \in [\pi, \frac{5 \pi}{4} [,\\
\frac{v\cos(\theta)}{r}, &\qquad \theta \in [\frac{5 \pi}{4}, \frac{3 \pi}{2} [,\\
0, &\qquad \theta \in [\frac{3 \pi}{2}, 2\pi [,
\end{array}
\right.
\end{equation}

\subsection{Discussion on the models}
While the two models have similar coefficients, there are some important differences.

In \eqref{eq_pn}, the radial drift $\mu_1$ may vanish. In particular, no uniform bound
on the hitting time of the origin is available; we can only prove that it is finite a.s.,
see Proposition~\ref{tau_f}. In Theorem~\ref{Solution_pn}, however, it turns out that
a uniform bound is required only on a favorable event, and this is proved in
Proposition~\ref{tau_f_sw}.

By \emph{critical angles} we mean the attracting equilibria of the angular drift,
i.e.\ the angles toward which $\mu_2$ pushes the process from both sides. Near such
an angle the factor $1/r$ in $\mu_2$  cause the angular drift to blow up as
$r\to 0$. The two models differ
precisely here. In \eqref{eq_pn} the angular drift $\mu_2=\mp\frac{v\sin\theta}{r}$
vanishes \emph{at} its critical angles $\{j\pi : j\in\Z\}$, since $\sin(j\pi)=0$. In \eqref{eq_sail}, by
contrast, $\mu_2$ vanishes on the intervals $[\tfrac\pi2,\pi[$ and
$[\tfrac{3\pi}{2},2\pi[$, whereas its critical angles are $\{\piq + j \pi : j\in\Z\}$,
at which $\mu_2$ does \emph{not} vanish (indeed it is discontinuous there, with
nonzero one-sided values). Because of this difference in critical-angle structure,
the final step, Item~7) in the proofs of Theorem~\ref{Solution_sail} and
Theorem~\ref{Solution_pn}, must be handled differently in the two cases.

\subsection{Existence and uniqueness of a strong solution up to $\eta >0$}
As a preliminary result, we show existence and uniqueness of a strong solution up to reaching a ball of radius $\eta >0$, centered around the origin, by applying \cite[Thm.~2]{Veretennikov1983}.

Define  
\begin{equation}\label{tau_eta_def}
\tau_\eta := \inf\{t:\, R_t \leq \eta \}.
\end{equation}
Observe that $\tau_\eta$ is increasing as $\eta \ra 0$, so there exists $\tau$ such that $\tau_\eta \ra \tau$ a.s. (possibly $\tau= \infty$).
Then, the following stopping time is well-defined:
\begin{equation}\label{tau_a}
\tau := \lim_{n \ra \infty}\tau_\eta.
\end{equation}

\begin{rem}\label{1.7}
 We can characterize $\tau$ as the first time $R_{t}$ hits the origin. Indeed, for $t< \tau$, then there exists $\eta$ such that $t\leq \tau_\eta$ so $R_{t} \geq \eta$ . This implies $R_{t} >0$ for all $t < \tau$. So we can define $\tau$ as:
\begin{align}\label{tau_a_def1}
\tau= \inf\{t: R_{t} = 0 \}.
\end{align}
\end{rem}

\begin{dfn}\label{def:sol_tau}
Let $\tau=\inf\{t\ge 0:R_t=0\}$. We say that an
$(\mathcal F_t)$-adapted process $(R_t,\Theta_t)_{t\in[0,\tau]}$ is a
\emph{strong solution to \eqref{state} up to $\tau$} if:
\begin{enumerate}
\item for every $t<\tau$ one has $R_t>0$, and $(R_s,\Theta_s)_{s\in[0,t]}$ is a
strong solution to \eqref{state} on $[0,t]$ in the usual sense of
\cite[Def.~2.1, Chap.~5]{ks}; in particular it is continuous and the integrability
condition
\begin{equation}\label{eq_int_con}
\int_0^t\bigl(|\mu_1(R_s,\Theta_s)|+|\mu_2(R_s,\Theta_s)|\bigr)\,ds<\infty
\end{equation}
holds for every $t<\tau$, and
\[
R_t=r+\int_0^t\mu_1(R_s,\Theta_s)\,ds,\qquad
\Theta_t=\theta+\int_0^t\mu_2(R_s,\Theta_s)\,ds+\sigma B_t;
\]
\item the limit $(R_\tau,\Theta_\tau):=\lim_{t\nearrow\tau}(R_t,\Theta_t)$ exists
almost surely.
\end{enumerate}
\end{dfn}

\begin{rem}
The restriction to $t<\tau$ in Item 1. is essential: the integrability
condition \eqref{eq_int_con}  fails at the
endpoint, since $R_s\searrow 0$ as $s\nearrow\tau$ forces the angular drift
$\mu_2(r,\theta)=\frac{f(\theta)}{r}$ to be non-integrable on $[0,\tau]$ in general. The solution
is therefore characterized at $\tau$ through the almost sure limit in
Item 2., not through the equation \eqref{state}; in particular we make no
claim about continuation beyond $\tau$. %Since $\mu_1$ is bounded, the radial
%identity nonetheless extends to the endpoint, giving
%$R_\tau=r+\int_0^\tau\mu_1(R_s,\Theta_s)\,ds=0$.
\end{rem}

%We recall that existence and uniqueness results for SDEs with monotone coefficients are established, for instance, in \cite[Sec.~3]{krylov1981} and \cite[Theorem~3.1.1]{liu2015}. However, these results cannot be applied directly as they require the coefficients to be continuous with respect to the space variable. This continuity assumption is not imposed in item. 2 because it would be too restrictive, for example the coefficients arising in the sailboat trajectory optimization problem are not continuous.

%On the other hand, results for SDEs with discontinuous drift, such as \cite{veretennikov1982}, would be applicable only in the nondegenerate diffusion case. This is not the setting considered here. Therefore, our proof relies instead on the classical Yamada--Watanabe argument \cite[Chapter~5, Section~D]{ks}.

\begin{prop}\label{Solution}
Let  $\eta > 0$ and $\mu_1, \mu_2$ be as in \eqref{eq_pn} or \eqref{eq_sail}.
For every $(r,\t)\in \R_+ \times \R$ there exists a unique strong solution to \eqref{state} on $[0,\tau_\eta]$.
\end{prop}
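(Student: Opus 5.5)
The plan is to remove the singularity by truncation and then invoke \cite[Thm.~2]{Veretennikov1983}, which gives strong existence and pathwise uniqueness for systems in which part of the state evolves without noise. Its hypotheses are:
\begin{itemize}
\item the drift of the degenerate component is Lipschitz in that component;
\item the drift of the nondegenerate component is merely bounded and measurable;
\item the diffusion coefficient is constant and nondegenerate on the noisy component.
\end{itemize}

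Fix $\eta>0$ and set $\rho_\eta(r):=\max(r,\eta/2)$. Define the modified coefficients
\[
\tilde\mu_1(r,\t):=\mu_1(\t),\qquad \tilde\mu_2(r,\t):=\frac{f(\t)}{\rho_\eta(r)}.
\]
Here $f$ is the bounded, piecewise-smooth numerator read off from \eqref{eq_pn} or \eqref{eq_sail}. Both $\tilde\mu_i$ are then bounded and Borel measurable on $\R_+\times\R$. In both models $\mu_1$ depends on $\t$ only, so the radial drift is trivially Lipschitz in $r$, with constant $0$. The angular drift $\tilde\mu_2$ is bounded by $\|f\|_\infty\cdot 2/\eta$. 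The diffusion in $\T$ is the constant $\sigma>0$. The one condition I must check carefully is the form of the hypothesis in \cite{Veretennikov1983}: it allows discontinuity in the nondegenerate variable $\t$ (the sailboat $f$ jumps at $\piq+j\pi/2$, and the periodized proportional navigation $\mu_1,\mu_2$ jump at $\pm\pim$), but it requires Lipschitz regularity in the degenerate variable $r$ uniformly in $\t$.

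That requirement is the subtle point. In Veretennikov's degenerate setting the radial drift $\mu_1$ is allowed to depend on $\t$ only measurably, and here it is discontinuous in $\t$. I would therefore state explicitly that the result covers drifts of the form $b_1(x_1,x_2)$ which are Lipschitz in $x_1$ and merely measurable in $x_2$, where $x_2$ carries nondegenerate noise. If that version is not available verbatim, I would instead prove pathwise uniqueness by hand:
\begin{itemize}
\item Apply Zvonkin's transformation to the one-dimensional $\T$ equation with bounded drift $\tilde\mu_2(R_t,\cdot)$, treating $R$ as a given continuous adapted process.
\item This makes the angular drift regular, and the radial equation $dR=\mu_1(\T)dt$ then integrates explicitly.
\item Weak existence follows from Girsanov's theorem, since the drift is bounded.
\item Yamada--Watanabe then upgrades weak existence plus pathwise uniqueness to strong existence.
\end{itemize}
This step, handling the discontinuity in $\t$ of the radial drift while the noise is degenerate in $r$, is the main obstacle I expect.

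Once a unique strong solution $(\tilde R,\tilde\T)$ of the truncated system exists on $[0,\infty)$, I take $\tilde\tau_\eta:=\inf\{t:\tilde R_t\le\eta\}$. On $[0,\tilde\tau_\eta]$ we have $\tilde R_t\ge\eta>\eta/2$, so $\rho_\eta(\tilde R_t)=\tilde R_t$ and the truncated coefficients coincide with the original ones. Hence $(\tilde R,\tilde\T)$ solves \eqref{state} on $[0,\tilde\tau_\eta]$, and the integrability condition holds trivially because the coefficients are bounded there.

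For uniqueness, let $(R,\T)$ be any strong solution of \eqref{state} on $[0,\tau_\eta]$. Up to $\tau_\eta$ it also solves the truncated system, so by pathwise uniqueness for the truncated system it agrees with $(\tilde R,\tilde\T)$ up to $\tau_\eta\wedge\tilde\tau_\eta$. Both stopping times are defined from the same path, so $\tau_\eta=\tilde\tau_\eta$ a.s. This yields uniqueness on $[0,\tau_\eta]$ for every starting point $(r,\t)$. The case $r\le\eta$ is trivial, since then $\tau_\eta=0$.
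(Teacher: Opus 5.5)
Your overall route is the paper's. Both apply \cite[Thm.~2]{Veretennikov1983} with $r$ as the degenerate variable and $\theta$ as the nondegenerate one. Your truncation $\rho_\eta$ is a cleaner way to justify the localization to $\{R\ge\eta\}$ than the paper's direct appeal to ``a unique strong solution up to the first exit time''. If Remark~1 there is read as requiring a genuine bounded derivative of $\mu_2$ in $r$, use a smooth cutoff instead of $\max(r,\eta/2)$, which has a kink at $\eta/2$.

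The problem is the step you single out as the main obstacle, which rests on a misreading of the coefficients: $\mu_1$ is \emph{not} discontinuous in $\theta$ in either model. In \eqref{eq_pn}, $\mu_1(\theta)=-v|\cos\theta|$, which vanishes from both sides at $\pm\pim$; only $\mu_2$ jumps there. In \eqref{eq_sail}, the six branches of $\mu_1$ match at every breakpoint: the value is $-v$ at $0,\pim,\pi,\pitm$ and $-v/\sqrt2$ at $\piq,\tfrac{5\pi}{4}$. Hence $\mu_1$ is Lipschitz in $\theta$ with constant $v$. That is exactly the hypothesis Veretennikov's theorem imposes on the degenerate drift, and the paper verifies it (``$\sigma,\mu_1$ must be Lipschitz in $\theta$'').

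Conversely, the version you propose to ``state explicitly'' is not what that theorem gives: it does not allow the degenerate drift to be merely measurable in the nondegenerate variable. Your Zvonkin fallback would not prove such a version either. The transformation only regularizes the drift of the noisy component. Comparing two solutions still requires controlling $|R_t-\tilde R_t|\le\int_0^t|\mu_1(\Theta_s)-\mu_1(\tilde\Theta_s)|\,ds$ by $|\Theta-\tilde\Theta|$. For a discontinuous $\mu_1$ this fails, since there is no noise in the $r$-direction to smooth it. Girsanov and Yamada--Watanabe then have no pathwise uniqueness to upgrade.

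Replace that whole discussion by the one-line Lipschitz check and the argument closes. The remaining steps are fine: the truncated and original coefficients coincide on $[0,\tilde\tau_\eta]$, $\tau_\eta=\tilde\tau_\eta$, and the case $r\le\eta$ is trivial. The one standard caveat is that a solution defined only up to $\tau_\eta$ must be continued past $\tau_\eta$, or the uniqueness argument stopped at $\tau_\eta$, before you invoke global pathwise uniqueness for the truncated system. Incidentally, the sailboat numerator $f$ jumps at $\piq+j\pi$, not $\piq+j\pim$. This is harmless, since only boundedness and measurability of $\mu_2$ in $\theta$ are needed.
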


\begin{proof}
The system \eqref{state} is of the form considered in \cite{Veretennikov1983}, with the identification, in his notations,  $x = r$ (degenerate component, no noise) and $y = \t$ (non-degenerate component, driven by $\sigma dB_t$):
\begin{equation}
\begin{cases}
dR_t = \mu_1(\T_t)\, dt, \\
d\T_t = \mu_2(R_t, \T_t)\, dt + \sigma\, dB_t.
\end{cases}
\end{equation}
%Still using the notations of \cite{Veretennikov1983}, this corresponds to $\tau \equiv 0$, $\beta = \mu_1$, $\sigma$ constant, and $b = \mu_2$.

We verify the hypotheses of \cite[Thm.~2]{Veretennikov1983}, taking into account Remark~1 there which weakens the requirement of two bounded derivatives in $r$ to one bounded derivative when the diffusion is identically zero in the $r$-direction (which is the case here).

\noindent \textit{Standing hypothesis  and Remark~1.} The coefficient $ \mu_2$ (resp. $\sigma$) must have one (resp. two) bounded derivative(s) in $r$. Since $\sigma$ is constant, the latter is trivial.  $\mu_2(r, \t)$ is of the form $\frac{f(\t)}{r}$ so it also satisfies the assumption on $[\eta, \infty[$.

The coefficients $\sigma, \mu_1$ must be Lipschitz in $\t$, which is the case.

\noindent\textit{Hypothesis of Theorem~2.} All coefficients must be Lipschitz in $r$, and $ \mu_2$ should just be bounded and measurable in $y = \t$.
\begin{enumerate}
\item $\mu_1$ does not depend on $r$, hence trivially Lipschitz in $r$.
\item $\mu_2$ is Lipschitz in $r$ on $[\eta, \infty[$, as computed above.
\item $\sigma$ constant, trivially Lipschitz.
\item $\mu_2$ is bounded measurable in $\t$: it is piecewise smooth with a finite number of jumps on $[0,2\pi[$, and $|\mu_2(r, \t)| \leq \frac{v}{\eta}$ uniformly.
\end{enumerate}

\noindent\textit{Non-degeneracy on the $y$-component.} The diffusion coefficient acting on $y_t = \T_t$ is the constant $\sigma > 0$, satisfying the uniform non-degeneracy condition.

To conclude, all hypotheses of \cite[Thm.~2, Remark~1]{Veretennikov1983} are satisfied on the domain $\{R \geq \eta\}$. By that theorem, the system admits a  unique strong solution up to the first exit time from this domain, which is exactly $\tau_\eta = \inf\{t \geq 0 : R_t \leq \eta\}$. 
\end{proof}

With the coefficients as in  \eqref{eq_sail}, an alternative proof to Proposition \ref{Solution}, relying on the classical Yamada--Watanabe argument \cite[Chapter~5, Section~D]{ks},   is given in \cite[Theorem 5.3]{CDV2025}.

\begin{rem}
If $\mu_1,\mu_2$ are as in \eqref{eq_sail}, then the following bound holds:
\begin{equation}\label{tau_bound}
 \tau_\eta \leq \sqrt{2}\, \frac{r- \eta}{v}  \qquad a.s.  
\end{equation}

Indeed the process $R_t$ satisfies the equation
\begin{align*}
R_{\tau_\eta} = r + \int_0^{\tau_\eta} \mu_1(R_t,\Theta_t) dt.
\end{align*}
%hence $R$ does hit the ball of radius $\eta$ in a finite amount of time since it is easily seen that, 
Since  $\mu_1(\t) \leq  -\frac{v}{\sqrt{2}}$ for every $\theta \in \R$, it follows that
\begin{align*}
\eta= R_{\tau_\eta} = r + \int_0^{\tau_\eta} \mu_1(R_t, \Theta_t)\, dt \leq r- \frac{v}{\sqrt{2}} \, \tau_\eta,
\end{align*}
and the conclusion follows. 
\end{rem}

\section{Strong solution up to $\tau$ -- Sailboat Navigation}

We start by proving strong well-posedness up to the origin for the sailboat navigation problem. Since the hitting time to the origin is uniformly bounded, the argument requires fewer intricacies than the corresponding one for the proportional navigation problem. 

%Next Theorem is the first main result of the paper, where we prove that \eqref{state} with the coefficients satisfying the Sailboat Navigation problem, \eqref{eq_sail}  admits a unique  strong solution up to the origin.

Before its proof we need a preparatory Lemma.

\begin{lem}\label{eq_h_bc}
Let $(\tau_{\frac 1m})_{m \geq 1}$ be as in \eqref{tau_eta_def}. Fix constants $\kappa > 0$, $v > 0$, $\sigma > 0$, and $\alpha \in \left]0, \tfrac{1}{2}\right[$, and define
\[
H_m := \left\{ \sup_{\tau_{\frac 1m} \leq s \leq \tau_{\frac 1m} + \frac{\kappa}{mv}} \left|B_s - B_{\tau_{\frac 1m}}\right| \leq \frac{m^{-\alpha}}{2\sigma} \right\}, \qquad m \geq 1.
\]
Then
\[
\sum_{m=1}^{\infty} \mathbb{P}(H_m^c) < \infty,
\]
and consequently, by the Borel--Cantelli lemma,
\[
\mathbb{P}\!\left(\liminf_{m \to \infty} H_m\right) = 1.
\]
\end{lem}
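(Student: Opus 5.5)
The plan is to reduce each $\mathbb{P}(H_m^c)$ to a Gaussian tail via the strong Markov property of Brownian motion, and then check that the resulting series is summable. Write $\tau_m:=\tau_{\frac 1m}$; it is an $(\mathcal F_t)$-stopping time, being the first hitting time of the closed set $\{R\le \frac1m\}$ by the continuous adapted process $R$. We must deal with the event $\{\tau_m=\infty\}$ (or with $R$ not being defined there). For the sailboat coefficients \eqref{eq_sail}, the bound \eqref{tau_bound} gives $\tau_m<\infty$ a.s., at least when $\frac1m<r$; when $\frac1m\ge r$, $\tau_m=0$. In any case we may interpret $H_m$ on $\{\tau_m=\infty\}$ as trivially satisfied, or work on $\{\tau_m<\infty\}$, which has full probability. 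On $\{\tau_m<\infty\}$ the strong Markov property says that $W^{(m)}_u:=B_{\tau_m+u}-B_{\tau_m}$, $u\ge0$, is a standard Brownian motion independent of $\mathcal F_{\tau_m}$. Since the time window $\kappa/(mv)$ is deterministic, this gives
\[
\mathbb{P}(H_m^c)=\mathbb{P}\Bigl(\sup_{0\le u\le \frac{\kappa}{mv}}|W_u|>\frac{m^{-\alpha}}{2\sigma}\Bigr),
\]
where $W$ is a standard Brownian motion.

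Next we use the reflection principle in the form $\mathbb{P}(\sup_{u\le T}|W_u|>a)\le 2\,\mathbb{P}(\sup_{u\le T}W_u>a)=4\,\mathbb{P}(W_T>a)\le 4e^{-a^2/(2T)}$. With $T=\kappa/(mv)$ and $a=m^{-\alpha}/(2\sigma)$ this gives
\[
\mathbb{P}(H_m^c)\le 4\exp\Bigl(-\frac{v}{8\sigma^2\kappa}\,m^{1-2\alpha}\Bigr).
\]
Since $\alpha<\tfrac12$, the exponent $1-2\alpha$ is positive, so the terms decay like $\exp(-c\,m^{\beta})$ with $c>0$ and $\beta>0$. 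Such a series is summable, for instance by comparison with $m^{-2}$, because $m^2e^{-cm^\beta}\to0$. The first Borel--Cantelli lemma then gives $\mathbb{P}(\limsup H_m^c)=0$, which is exactly $\mathbb{P}(\liminf H_m)=1$.

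The only delicate step is justifying the strong Markov property at the random time $\tau_m$. This needs $\tau_m$ to be a stopping time for a filtration with respect to which $B$ is an $(\mathcal F_t)$-Brownian motion, which holds because the strong solution from Proposition \ref{Solution} is adapted. It also needs care on $\{\tau_m=\infty\}$, which is negligible by \eqref{tau_bound}. Everything else is a routine Gaussian estimate, and the condition $\alpha<\frac12$ is used exactly once, to make the exponent grow.
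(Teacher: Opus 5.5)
Your proof is correct and follows essentially the same route as the paper. Both arguments use the strong Markov property at $\tau_{\frac 1m}$, the reflection principle for $\sup|W|$, and a Gaussian tail bound giving $\mathbb{P}(H_m^c)\le 4\exp\bigl(-\frac{v}{8\sigma^2\kappa}m^{1-2\alpha}\bigr)$, then conclude by Borel--Cantelli; the paper reaches the same exponent via Brownian scaling with $C=2\sigma\sqrt{\kappa/v}$. Your explicit treatment of the stopping-time property and of the event $\{\tau_{\frac 1m}=\infty\}$ is a small addition that the paper leaves implicit.
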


\begin{proof}
By the strong Markov property applied at the stopping time $\tau_{\frac 1m}$, the process $s \mapsto B_{\tau_{\frac 1m} + s} - B_{\tau_{\frac 1m}}$ is a standard Brownian motion independent of $\mathcal{F}_{\tau_{\frac 1m}}$. Hence
\[
\mathbb{P}(H_m^c) = \mathbb{P}\!\left(\sup_{0 \leq s \leq \frac{\kappa}{mv}} |B_s| > \frac{m^{-\alpha}}{2\sigma}\right).
\]
By the Brownian scaling identity $\sup_{0 \leq s \leq \frac{\kappa}{mv}} |B_s| \stackrel{d}{=} \sqrt{\tfrac{\kappa}{mv}}\, \sup_{0 \leq u \leq 1} |B_u|$,
\[
\mathbb{P}(H_m^c) = \mathbb{P}\!\left(\sup_{0 \leq u \leq 1} |B_u| > \frac{m^{\frac{1}{2}-\alpha}}{C}\right),
\]
where
\[
C := 2\sigma\sqrt{\tfrac{\kappa}{v}}.
\]
By the law of the supremum of Brownian motion (see e.g.\ \cite{revuzYor}, Remark 8.3 in Section 2.8),
\[
\mathbb{P}\!\left(\sup_{0 \leq u \leq 1} |B_u| > \frac{m^{\frac{1}{2}-\alpha}}{C}\right) \leq 4 \int_{\frac{m^{\frac{1}{2}-\alpha}}{C}}^{\infty} \frac{1}{\sqrt{2\pi}}\, e^{-x^2/2}\, dx.
\]
Since $\alpha < \tfrac{1}{2}$, the threshold $\frac{m^{\frac{1}{2}-\alpha}}{C}$ diverges as $m \to \infty$. For $m$ large enough that $\frac{m^{\frac{1}{2}-\alpha}}{C} \geq 1$,
\[
\int_{\frac{m^{\frac{1}{2}-\alpha}}{C}}^{\infty} \frac{1}{\sqrt{2\pi}}\, e^{-x^2/2}\, dx \;\leq\; \int_{\frac{m^{\frac{1}{2}-\alpha}}{C}}^{\infty} x\, \frac{1}{\sqrt{2\pi}}\, e^{-x^2/2}\, dx \;=\; \frac{1}{\sqrt{2\pi}}\, e^{-\frac{m^{1-2\alpha}}{2C^2}}.
\]
To conclude,
\[
\mathbb{P}(H_m^c) \;\leq\; \frac{4}{\sqrt{2\pi}}\, e^{-\frac{m^{1-2\alpha}}{2C^2}} \qquad \text{for all $m$ large enough.}
\]
Since $1 - 2\alpha > 0$, the right-hand side is summable in $m$, so $\sum_{m=1}^{\infty} \mathbb{P}(H_m^c) < \infty$. The Borel--Cantelli lemma yields $\mathbb{P}(\limsup_m H_m^c) = 0$, equivalently $\mathbb{P}(\liminf_m H_m) = 1$.
\end{proof}

\begin{thm}\label{Solution_sail}
Let $\mu_1,\mu_2$ be as in \eqref{eq_sail} and $\eta = 0$. Then  for every $(r,\t)\in \R_+ \times \R$ there exists a unique strong solution to \eqref{state} in $[0,\tau]$ in the sense of Definition \ref{def:sol_tau}.
\end{thm}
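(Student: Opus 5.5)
Our plan is to build the solution on $[0,\tau)$ by patching together the solutions of Proposition~\ref{Solution}, and then to show that the limit at $\tau$ exists. The existence of a limit for $\Theta$ is the only genuinely new point, because of the $1/r$ singularity.

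\emph{Construction on $[0,\tau)$.} For each $m$ with $\frac1m<r$, Proposition~\ref{Solution} gives a unique strong solution $(R^m,\Theta^m)$ on $[0,\tau_{\frac1m}]$. By pathwise uniqueness, $(R^{m+1},\Theta^{m+1})$ coincides with $(R^m,\Theta^m)$ on $[0,\tau_{\frac1m}]$ almost surely. Patching these gives one adapted continuous process on $[0,\tau)$, where $\tau=\lim_m\tau_{\frac1m}$.

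\emph{Properties on $[0,\tau)$ and the radial limit.} For $t<\tau$ we have $t\le\tau_{\frac1m}$ for some $m$. On that interval $|\mu_2|\le vm$, so the integrability condition \eqref{eq_int_con} and the integral equations hold, and $R_t>0$ by Remark~\ref{1.7}. By \eqref{tau_bound}, $\tau\le\sqrt2 r/v<\infty$. Since $|\mu_1|\le v$, $R$ is $v$-Lipschitz, so $R_\tau=\lim R_t=0$ exists.

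\emph{Uniqueness.} Any solution in the sense of Definition~\ref{def:sol_tau} restricted to $[0,\tau_{\frac1m}]$ is a solution on that interval, so it agrees with ours by Proposition~\ref{Solution}. Letting $m\to\infty$ gives agreement on $[0,\tau)$. The two hitting times of $0$ coincide because $\tau$ is defined from $R$, and the limits at $\tau$ then agree.

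\emph{The main obstacle: existence of $\lim_{t\nearrow\tau}\Theta_t$.} We will use Lemma~\ref{eq_h_bc}. Since $|\mu_1|\le v$ and $\mu_1\le -v/\sqrt2$, we get
\[
\frac{1/m}{v}\le \tau-\tau_{\frac1m}\le \frac{\sqrt2}{mv}.
\]
Take $\kappa=\sqrt2$, so that $[\tau_{\frac1m},\tau]$ lies inside the window of $H_m$. On $\liminf_m H_m$, for $m$ large the noise oscillation of $\sigma B$ on $[\tau_{\frac1m},\tau)$ is at most $m^{-\alpha}/2\to0$. It then remains to control the drift oscillation on that window, where the drift can be of size $v/R_s\approx vm$ over a time of order $1/m$, i.e.\ an $O(1)$ contribution.

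To handle this we use the structure of \eqref{eq_sail}. The angular drift vanishes on $[\frac\pi2,\pi[$ and $[\frac{3\pi}{2},2\pi[$. Elsewhere it points toward the attracting critical angles $\piq+j\pi$ and never pushes $\Theta$ across them: on $[0,\piq[$ it is $+v\sin\theta/r\ge0$, and on $[\piq,\pim[$ it is $-v\cos\theta/r\le0$. Hence between two times $s<t$ in the window, $\Theta$ moves by drift only monotonically toward the nearest critical angle, and the noise moves it by at most $m^{-\alpha}/2$.

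We then argue as follows. Fix a path in $\liminf H_m$. Within each window $[\tau_{\frac1m},\tau)$ the process $\Theta$ stays in a set of the form
\[
\bigl[\Theta_{\tau_{1/m}}-m^{-\alpha},\ \text{next critical angle}+m^{-\alpha}\bigr]
\]
(up to orientation). Drift cannot carry it past a critical angle, and it cannot climb back against the drift by more than the noise bound. A comparison argument gives this: compare $\Theta_t-\sigma(B_t-B_{\tau_{1/m}})$, which is absolutely continuous with derivative of a definite sign on each side of the critical angle.

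Nesting these sets for successive $m$ shows that the oscillation of $\Theta$ on $[\tau_{\frac1m},\tau)$ tends to $0$, which needs care. Either $\Theta$ eventually stays in a flat-drift zone, where only the noise acts and the oscillation is at most $m^{-\alpha}$. Or $\Theta$ is trapped in a shrinking neighbourhood of a critical angle, so the limit is $\piq+j\pi$. In the remaining case $\Theta$ moves monotonically toward a critical angle; then the monotone part converges and the noise part is Cauchy. In each case $\Theta$ is Cauchy as $t\nearrow\tau$, giving Item 2 of Definition~\ref{def:sol_tau} almost surely. The hardest step is this trapping/monotonicity bookkeeping near the discontinuous critical angles $\piq+j\pi$.
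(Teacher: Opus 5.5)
Your proposal is correct in outline, but for the key step, the existence of $\lim_{t\nearrow\tau}\Theta_t$, it takes a genuinely different route from the paper.

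\textbf{The paper's route.} The paper confines $\liminf$ and $\limsup$ of $\Theta$ to a single quadrant in Items (1)--(5), each via a ``finitely many Brownian increments of fixed size on $[0,\tau]$'' argument. It settles the flat quadrants in Item (6). In the attracting quadrants (Item (7)) it proves, in Proposition~\ref{fact_2}, that $\Theta$ is actually driven to $\pi/4+j\pi$. That proof uses the time change $s=\tau_\rho$ and the divergence of $\int_q^{1/m}\rho^{-1}\,d\rho$ as $q\downarrow 0$.

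\textbf{Your route.} You use that the drift part $Y_t=\Theta_t-\sigma(B_t-B_s)$ is monotone while $\Theta$ stays in a region where $\mu_2$ has a fixed sign. These regions are $[\pi/4+j\pi,\pi+j\pi[$, where $\mu_2\le 0$, and $[\pi/2+j\pi,5\pi/4+j\pi[$, where $\mu_2\ge 0$. Boundedness of $Y$ and continuity of $B$ at $\tau<\infty$ then give the limit directly. Near a critical angle the same comparison gives trapping.

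\textbf{What each approach buys.} Your argument is more elementary and more robust. It uses only the sign pattern of $\mu_2$ and $\tau<\infty$, not the size of the $1/r$ factor. It does not even need Lemma~\ref{eq_h_bc}: uniform continuity of $B$ on $[0,\tau]$ already makes the noise oscillation on $[t,\tau[$ vanish as $t\nearrow\tau$. The price is that you do not identify the limit. Your argument cannot exclude convergence to an interior point of an attracting zone such as $]0,\pi/4[$. The paper shows the limit lies in an open flat zone or equals $\pi/4+j\pi$, and that is where the blow-up of the drift is genuinely used. This identification is not required by Definition~\ref{def:sol_tau}.

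\textbf{Two points to make explicit when you write it out.}
\begin{enumerate}
\item The sets anchored at $\Theta_{\tau_{1/m}}$ do not shrink by nesting alone, since their width is roughly the distance from $\Theta_{\tau_{1/m}}$ to the next critical angle. The shrinking neighbourhood comes from restarting the comparison at a time $s\in[\tau_{1/m},\tau[$ with $\Theta_s=\pi/4+j\pi$. This gives $|\Theta_u-\pi/4-j\pi|\le 2m^{-\alpha}$ for all $u\in[s,\tau[$.
\item Your three cases are exhaustive by a dichotomy. Either a critical angle is hit after every $t<\tau$, in which case the bound above applies with $j$ eventually fixed. Or after some $\tau_{1/m_0}$ no critical angle is hit. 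In that case your confinement set lies in a single fixed-sign region, because a flat zone of width $\pi/2$ cannot be traversed once the noise oscillation is below $\pi/2$. Hence $Y$ is monotone on $[\tau_{1/m_0},\tau[$.
\end{enumerate}
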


\begin{proof}

We first  prove that the conditions of Definition \ref{def:sol_tau} are satisfied. Item 1. follows from Proposition \ref{Solution}, so it remains to prove that
 for every $(r,\t)\in \R_+ \times \R$
\begin{align}
\lim_{t\nearrow \tau} (R_t,\T_t )
\end{align}
exists a.s.  

Since the process $ R_t$ is bounded from below and decreasing, then it always has a limit, in particular by Remark \ref{1.7}, it follows that $ \lim_{t\ra \tau} R_t = 0 $. It remains to show that $ \lim_{t\ra \tau} \T_t $ exists.

We prove that
\begin{equation}
\lim_{t \nearrow \tau} \Theta_t \in \bigcup_{j \geq 0}\left( \left](2j+1)\tfrac{\pi}{2},\, (2j+2)\tfrac{\pi}{2}\right[ \,\cup\, \left\{ \tfrac{\pi}{4} + j\pi \right\} \right)\quad \text{a.s.}
\end{equation}
This reflects two types of limiting behavior. On the intervals $\left](2j+1)\tfrac{\pi}{2},\, (2j+2)\tfrac{\pi}{2}\right[$ — namely $\left]\tfrac{\pi}{2}, \pi\right[$ and $\left]\tfrac{3\pi}{2}, 2\pi\right[$ modulo $2\pi$ — the angular drift vanishes ($\mu_2 \equiv 0$), and $\Theta_t$ evolves as a pure Brownian motion, which can converge to any point of the interval. At the singular angles $\tfrac{\pi}{4} + j\pi$, the drift $\mu_2$ jumps from $+\tfrac{v}{r\sqrt 2}$ to $-\tfrac{v}{r\sqrt 2}$ as $\theta$ crosses the singularity: on either side the drift points back toward the singular angle, and its magnitude blows up as $R_t \to 0$, forcing $\Theta_t$ to converge to it.

\begin{itemize}
\item[(1)] We first prove that $\T_t$ cannot wander to $\pm \infty$, that is:
\begin{equation}
\liminf_{t \nearrow \tau} \T_t >-\infty, \qquad \limsup_{t \nearrow \tau} \T_t < +\infty.
\end{equation}
\begin{proof}
Otherwise, $\T_t$ would cross infinitely many intervals $\left](2j+1)\tfrac{\pi}{2},\, (2j+2)\tfrac{\pi}{2}\right[$, and each requires a Brownian increment of size $\pm \pim$. This is impossible since $\tau$ a.s. finite (see \eqref{tau_bound}).
\end{proof}

 \hspace*{-\leftmargin}  In the next two items, we prove that $ \limsup_{t \nearrow \tau} - \liminf_{t \nearrow \tau} $ is a.s. uniformly bounded.

\item[(2)] $\forall k\in \Z$, $\forall \ve >0$ the probability that $\Theta_t$ crosses $[\frac{k\pi}{2}-\ve, \frac{k \pi}{2}+\ve]$ infinitely many times is zero.
\begin{proof}
We detail $k=0$; every other level reduces to this case by the
$2\pi$-periodicity of the coefficients together with the reflection
symmetry of \eqref{eq_sail} about the critical angles. In
particular $k$ odd is the mirror image of $k$ even under
$\theta\mapsto-\theta$, and in all cases one side of the level
$\tfrac{k\pi}{2}$ carries zero drift while the other carries a drift
pointing away from the level, which is all the argument uses.

 If $\T_0 = \t_0 =-\ve$, the equations of motion, for $t$ such that $\T_t  \leq 0 $, are
\begin{equation}
\left\{
\begin{array}{l l}
dR_t = -v dt , \\
d\T_t = \s dB_t,
\end{array}
\right. 
\end{equation}
and for $\T_t  \geq 0 $
\begin{equation}
\left\{
\begin{array}{l l}
dR_t = -v\cos \T_t dt ,\\
d\T_t = v \frac{\sin \T_t}{R_t} dt + \s dB_t,
\end{array}
\right. 
\end{equation}
so  crossing from $-\ve$ to $0$ requires a Brownian increment of $+\ve$. For the same reason crossing from $\ve$ to $-\ve$ requires a Brownian increment less than $-2\ve$ since the drift is non negative. Since $\tau < + \infty$ a.s., and $t\ra B_t$ is continuous, there can only be a finite number of such increments in $[0,\tau]$.% indeed for all $\eta  >0$, there is $\d  >0$ such that when $0 <\tau-t<\d$, then $| B_t-B_{\tau}|\leq \eta$, so all increments after time $\tau -\d$ are smaller than $\eta$, then take $\eta < \ve$.
\end{proof}

\item[(3)]
For $\ve >0$ let
 \begin{align}
j_\ve := \sup \left\{ j\in \Z:\, \liminf_{t \nearrow \tau} \T_t \geq \frac{j \pi}{2} -\ve  \right\}
\end{align}
and
\begin{align}
 k_\ve := \inf \left\{ k\in \Z:\, \limsup_{t \nearrow \tau} \T_t \leq \frac{k \pi}{2} +\ve  \right\},
\end{align}
then
\begin{align}
j_\ve = k_\ve \qquad \text{or} \qquad j_\ve +1 = k_{\ve}.
\end{align}
\begin{proof}
Otherwise $j_\ve +2 \leq k_\ve$, so $\liminf_{t \nearrow \tau} \T_t \leq \frac{(j_\ve+1)\pi}{2}-\ve$, $\limsup_{t \nearrow \tau} \T_t \geq \frac{(k_\ve-1)\pi}{2}+\ve$, and $j_\ve+1\leq k_\ve-1$, so the interval $[ \frac{(j_\ve+1)\pi}{2}-\ve, \frac{(k_\ve-1)\pi}{2}+\ve ]$ would be crossed infinitely often, which is impossible by Item (2).
\end{proof}

\item[] 
\begin{adjustwidth}{-\leftmargin}{0pt}
By Item (3), we can claim that 
\begin{equation}\label{big_set1}
\P\left(\bigcup_{j\in \Z} E_{j,\ve}\right) = 1,
\end{equation}
where 
$$E_{j,\ve} =  \left\{ \frac{j \pi}{2}-\ve \leq \liminf_{t \nearrow \tau} \T_t \leq  \limsup_{t \nearrow \tau} \T_t  \leq \frac{(j +1)\pi}{2}+\ve \right\}.$$

Define
\begin{align}
F_{j,\ve} &:= \left\{ \frac{j \pi}{2}-\ve \leq \liminf_{t \nearrow \tau} \T_t ,\,   \frac{j \pi}{2}+ \ve < \limsup_{t \nearrow \tau} \T_t  \leq \frac{(j +1)\pi}{2}+\ve \right\},\\
G_{j,\ve} &:= \left\{ \frac{j \pi}{2}-\ve \leq \liminf_{t \nearrow \tau} \T_t ,\,    \limsup_{t \nearrow \tau} \T_t  \leq \frac{j \pi}{2}+\ve \right\}.
\end{align}
Then for every $\ve>0$:
\begin{align}
F_{j,\ve} \cap G_{j,\ve} = 0,\qquad F_{j,\ve} \cup G_{j,\ve} = E_{j,\ve}, 
\end{align}
and 
\begin{align}
\P\left(\bigcup_{j\in \Z} (F_{j,\ve} \cup G_{j,\ve}) \right) = 1, \label{fcupg}
\end{align}

The next Item 4) and Item 5) further refine \eqref{fcupg} and prove that 
\begin{equation}\label{eq_33a}
\P\Big(\bigcup_{j\in \Z}  (L_j \cup K_j)\Big) = 1,
\end{equation}
where
\begin{align}\label{claim2}
& L_j =  \left\{  \frac{j\pi}{2} < \liminf_{t\nearrow \tau} \T_t \leq \limsup_{t
\nearrow \tau} \T_t < \frac{(j+1)\pi}{2}  \right\} \qquad \text{or}\\
&K_j = \left\{\lim_{t\nearrow \tau} \T_t = \frac{j\pi}{2} \right\}.
\end{align}
In particular, $\liminf_{t\nearrow \tau}$ and $\limsup_{t\nearrow \tau}$ are both within the same quadrant. 

We start by proving a weaker statement than \eqref{eq_33a}.
\end{adjustwidth}

\item[(4)] $\P\Big(\bigcup_{j\in \Z}  (K_j\cup K_j^c) \Big)$ = 1, where
\begin{align}
K_j = \left\{\lim_{t\nearrow \tau} \T_t = \frac{j\pi}{2} \right\},
\end{align}
or
\begin{align}
K_j^c = \left \{ \frac{j\pi}{2} \leq \liminf_{t\nearrow \tau} \T_t < \frac{(j+1)\pi}{2} ; \, \frac{j\pi}{2}  < \limsup_{t\nearrow \tau} \T_t \leq \frac{(j+1)\pi}{2} \right \}.
 \end{align}
Observe that on $K^c_j$ no convergence to either extremity  happens.

\begin{proof}

We first prove the following claim.

\textit{Claim.} With probability $1$, there exists a unique $j \in \Z$ such that either $F_{j,\ve}$ or $G_{j,\ve}$ occurs for all sufficiently small $\ve > 0$.

\begin{proof}[Proof of the Claim]
Write $\ell^- := \liminf_{t \nearrow \tau} \T_t$ and $\ell^+ := \limsup_{t \nearrow \tau} \T_t$. By \eqref{fcupg}, for every $\ve > 0$ there exists $j(\ve) \in \Z$ such that $F_{j(\ve),\ve} \cup G_{j(\ve),\ve}$ occurs a.s.

We split into two cases according to whether $\ell^- = \ell^+ = \frac{j\pi}{2}$ for some $j \in \Z$, or otherwise.

\smallskip

\textit{Case 1: $\ell^- = \ell^+ = \frac{j\pi}{2}$ for some $j \in \Z$.}
Then $G_{j,\ve}$ holds for every $\ve > 0$.

\smallskip

\textit{Case 2: otherwise.}
By \eqref{fcupg}, there exist $j \in \Z$ and $\ve > 0$ such that $F_{j,\ve}$ occurs (indeed, $G_{l,\ve}$ fails for every $l$ once $\ve$ is sufficiently small, so only $F$-events can occur for small $\ve$).

If, for $\ve_1 < \ve$, $F_{l,\ve_1}$ happens for $l \neq j$, then $\ell^+$ would belong to disjoint intervals for $\ve_1$ small enough, a contradiction. Hence, for $\ve_1 < \ve$, at most $F_{j,\ve_1}$ can happen. It remains to prove that $F_{j,\ve_1}$ does happen for all sufficiently small $\ve_1 < \ve$.

For every $k \in \Z$, it cannot happen that $\ell^+ > \frac{k\pi}{2}$ and $\ell^- < \frac{k\pi}{2}$ simultaneously. Indeed, this would mean $\T_t$ crosses $\frac{k\pi}{2}$ with amplitude at least $\delta := \min\!\left(\ell^+ - \frac{k\pi}{2},\ \frac{k\pi}{2} - \ell^-\right) > 0$ infinitely often as $t \nearrow \tau$. By the model, the drift pushes $\T$ away from $\frac{k\pi}{2}$ on at least one side; on that side, every excursion of $\T$ back toward $\frac{k\pi}{2}$ requires the Brownian motion to supply a fluctuation of size $\ge \delta$ against the drift. Since $\tau < \infty$ a.s., the Brownian motion has only finitely many such fluctuations on $[0,\tau]$, a contradiction.

Hence there exists $j \in \Z$ for which $\frac{j\pi}{2} \leq \ell^- <  \frac{(j+1)\pi}{2}$ and $\frac{j\pi}{2} < \ell^+ \leq \frac{(j+1)\pi}{2}$, that is, $F_{j,\ve}$ occurs for all sufficiently small $\ve > 0$.
\end{proof}

\smallskip

In both cases, there is a unique $j \in \Z$ such that $F_{j,\ve} $ or $G_{j,\ve}$ occurs for all sufficiently small $\ve > 0$, which is equivalent to saying that with probability 1 there exists $j$ such that either $K_j$ or $K^c_j$ occurs.
\end{proof}

 \hspace*{-\leftmargin}\parbox{\dimexpr\linewidth+\leftmargin\relax}{ Building on Item 4), we now establish the stronger statement that  $K^c_j = L_j$, that is the pair $(\liminf_{t \nearrow \tau} \T_t,\, \limsup_{t \nearrow \tau} \T_t)$ cannot have one coordinate in the interior of the interval and the other at the boundary. This proves  \eqref{eq_33a}}

\item[(5)] For $j \in \Z$, 
\begin{equation}\label{1.9.6.1}
\P\left\{ \liminf_{t\nearrow \tau} \T_t = \frac{j\pi}{2}, \frac{j\pi}{2}  <\limsup_{t\nearrow \tau} \T_t \leq  \frac{(j+1)\pi}{2}   \right\}   = 0,
\end{equation}
and
\begin{equation}\label{1.9.6.2}
 \P \left\{ \frac{j\pi}{2} \leq \liminf_{t\nearrow \tau} \T_t < \frac{(j+1)\pi}{2}, \limsup_{t\nearrow \tau} \T_t = \frac{(j+1)\pi}{2}  \right \} = 0.
\end{equation}
In particular, $K^c_j =  L_j$ and \eqref{eq_33a} holds.
\begin{proof}

Suppose \eqref{1.9.6.1} occurs for some $j$
(\eqref{1.9.6.2} is symmetric and is treated at the end). Write
\[
\underline{\Theta} := \liminf_{t\nearrow\tau}\Theta_t = \frac{j\pi}{2},
\qquad
\overline{\Theta} := \limsup_{t\nearrow\tau}\Theta_t \in
\Bigl]\tfrac{j\pi}{2},\,\tfrac{(j+1)\pi}{2}\Bigr],
\]
and pick $\varepsilon$ with $0<\varepsilon<\overline{\Theta}-\tfrac{j\pi}{2}$.
Then $\Theta_t$ enters $[\tfrac{j\pi}{2},\,\tfrac{j\pi}{2}+\varepsilon]$
infinitely often (approaching $\underline{\Theta}$) and exceeds
$\tfrac{j\pi}{2}+\varepsilon$ infinitely often (approaching $\overline{\Theta}$),
so $\Theta$ crosses the band
$[\tfrac{j\pi}{2},\,\tfrac{j\pi}{2}+\varepsilon]$ downward infinitely
often as $t\nearrow\tau$.

On $\theta\in[\tfrac{j\pi}{2},\,\tfrac{j\pi}{2}+\varepsilon]$ the angular
drift never points toward $\tfrac{j\pi}{2}$: by \eqref{eq_sail},
for $j$ even it is nonnegative there, and for $j$ odd it vanishes
($\mu_2\equiv 0$). Writing a downcrossing from
$\tfrac{j\pi}{2}+\varepsilon$ to $\tfrac{j\pi}{2}$ over a time interval
$[t_1,t_2]$ as
\[
\Theta_{t_2}-\Theta_{t_1}
= \int_{t_1}^{t_2}\mu_2(R_s,\Theta_s)\,ds
+ \sigma\bigl(B_{t_2}-B_{t_1}\bigr),
\]
the integral term is $\ge 0$, hence
\[
\sigma\bigl(B_{t_2}-B_{t_1}\bigr)
\le \Theta_{t_2}-\Theta_{t_1} = -\varepsilon,
\qquad\text{i.e.}\qquad
B_{t_2}-B_{t_1}\le -\frac{\varepsilon}{\sigma}.
\]
Since $\tau<\infty$ a.s.\ and $B$ is uniformly continuous on the compact
interval $[0,\tau]$, only finitely many disjoint increments of magnitude
$\ge \varepsilon/\sigma$ occur on $[0,\tau]$. This contradicts the
infinitely many downcrossings and proves the first identity. 

The second event  \eqref{1.9.6.2}
follows by the mirror argument at the upper boundary
$\tfrac{(j+1)\pi}{2}$, where on the relevant side $\mu_2\le 0$ for $j$
even and $\mu_2\equiv 0$ for $j$ odd. In particular
$K^c_j=L_j$ and \eqref{eq_33a} holds.
\end{proof}

 \hspace*{-\leftmargin}\parbox{\dimexpr\linewidth+\leftmargin\relax}{ Looking back at \eqref{eq_33a}, if there exists $j$ for which
 $K_j$ occurs, then the conclusion follows and the limit exists,
otherwise $L_j$ occurs. }

\item[(6)] If $L_j$ occurs and $j$ is odd, then $\lim_{t\nearrow \tau} \T_t$ exists.
\begin{proof}
On $L_j$ with $j$ odd, $\Theta_t$ is ultimately confined to the interval
$\bigl]\tfrac{j\pi}{2},\,\tfrac{(j+1)\pi}{2}\bigr[$, on which
$\mu_2\equiv 0$; there
\[
  \Theta_t = \Theta_{t_0} + \sigma\bigl(B_t-B_{t_0}\bigr).
\]
Since $B$ is continuous and $\tau<\infty$, the limit
$\lim_{t\nearrow\tau}\Theta_t$ exists.
%Suppose \eqref{1.9.6.1} occurs, and, for instance, $j$ is even, say $j=0$. Then there exists $\ve>0$ for which the interval $[0,\ve]$ would be crossed downwards infinitely many times. Because the drift of $\T$ is positive on this interval, this requires a Brownian increment smaller or equal than $-\ve$. Since this can only happen finitely many times and we do not have $\lim_{t\nearrow \tau} \T_t = 0$,  we get a contradiction. 
%If $j$ is odd, the drift of $\T$ is zero on this interval, and the same argument applies.
\end{proof}

\item[(7)] If $L_j$ occurs and $j$ is even, then $\lim_{t\nearrow \tau} \T_t = \frac{j\pi}{2} + \piq$.
\begin{proof}

By the symmetries of the coefficients, without loss of generality,  it is enough to consider the case $j=0$.
For $m\in \N^\star$, set
\begin{align}\label{eq_hm}
H_m= \left\{\sup_{\tau_{\frac 1m} \leq t \leq \tau_{\frac 1m} + \frac{  \sqrt{2}}{mv} } |B_t-B_{\tau_{\frac 1m}} | \leq \frac{m^{-\frac{1}{4}}}{2\sigma} \right\},
 \end{align}
where  $\tau_{\frac 1m} =\inf  \{t: R_t\leq \frac{1}{m} \}$. Recall that, by Proposition \ref{Solution}, $\tau \leq \tau_{\frac 1m} +  \frac{ \sqrt{2}}{mv }$.
Let
\begin{align}
L_{0,\ve} = \left\{  \ve < \liminf_{t\nearrow \tau} \T_t  \leq \limsup_{t\nearrow \tau} \T_t< \pim -\ve \right\},
 \end{align}
then $L_0 = \cup_\ve L_{0,\ve}$, where $L_0$ is defined in \eqref{claim2}.
Define also
 \begin{align}
 L_{0,\ve,m} = L_{0,\ve} \cap  J_{0,m},
 \end{align}
with 
\begin{equation}\label{eq_jm}
J_{0,m}= \left\{     \frac{m^{-\frac{1}{4}}}{2} <\T_{\tau_{\frac 1m}} <  \pim -  \frac{ m^{-\frac{1}{4}}}{2}  \right\}
\end{equation}

\smallskip
\textit{Claim.}
$L_{0,\ve} = \bigcup_{n\in\N^\star} \bigcap_{m\geq n} L_{0,\ve,m}$.

\begin{proof}[Proof of the claim]

The inclusion $\supseteq$ is immediate from $L_{0,\ve,m} \subseteq L_{0,\ve}$ for every $m$.

For the converse, fix $\omega \in L_{0,\ve}$, so that
$$\ve < \liminf_{t\nearrow\tau}\T_t(\omega) \leq \limsup_{t\nearrow\tau}\T_t(\omega) < \tfrac{\pi}{2} - \ve.$$
By definition of $\liminf$ and $\limsup$ as $t \nearrow \tau$, there exists $\delta > 0$ such that
$$\ve < \T_t(\omega) < \tfrac{\pi}{2} - \ve \qquad \text{for all } t \in ]\tau - \delta, \tau].$$
Since $\tau_{\frac 1m} \nearrow \tau$ as $m \to \infty$, there exists $n_1 \in \N^\star$ such that $\tau_{\frac 1m} \in ]\tau - \delta, \tau]$ for all $m \geq n_1$, and hence
$$\ve < \T_{\tau_{\frac 1m} }(\omega) < \tfrac{\pi}{2} - \ve \qquad \text{for all } m \geq n_1.$$
Choose $n_2 \in \N^\star$ such that $\tfrac{n_2^{-\frac 14}}{2} < \ve$; then for all $m \geq n_2$,
$$\tfrac{m^{-\frac 14}}{2} \leq \tfrac{n_2^{-\frac 14}}{2} < \ve.$$
Setting $n := \max(n_1, n_2)$, we obtain for all $m \geq n$
$$\frac{m^{-\frac 14}}{2} < \ve < \T_{\tau_{\frac 1m }}(\omega) < \frac{\pi}{2} - \ve < \frac{\pi}{2} - \frac{m^{-\frac 14}}{2},$$
i.e.\ $\omega \in L_{0,\ve,m}$ for every $m \geq n$. Hence $\omega \in  \bigcup_{n\in\N^\star}\bigcap_{m\geq n}L_{0,\ve,m}$.
\end{proof}
Therefore, 
\begin{align}
L_0 = \bigcup_{\ve >0} \bigcup_{n\in \N^\star} \bigcap_{m\geq n} L_{0,\ve,m}.
 \end{align}

Assume for the moment the two following facts:
\begin{enumerate}
\item Fact 1. $\sum_{m=1}^\infty \P(H_m^c) < \infty$, so for  $m$ large enough, $H_m$ occurs.
 \item   Fact 2. On $H_m \cap L_{0,\ve,m}$, $ \limsup_{t \nearrow \tau} \sup_{s\geq t} |\T_s-\piq| \leq m^{-\frac{1}{4}}$.
\end{enumerate}

 On $L_0$, there is $\ve >0$ for which $L_{0,\ve}$ occurs. Then for all large $m$, $H_m \cap L_{0,\ve,m}$ occurs, so by Fact 2
 \begin{align}
 \lim \sup_{t \nearrow \tau} \sup_{s\geq t} |\T_s-\piq| \leq  m^{-\frac{1}{4}} \end{align}
for all large $m$, therefore
 \begin{align}
\lim \sup_{t \nearrow \tau} \sup_{s\geq t} |\T_s-\piq| \leq 0,
 \end{align}
and hence it is equal to zero. This is equivalent to $\lim_{t\nearrow \tau} |\T_s-\piq| = 0,$ i.e., $\lim_{t\nearrow \tau} \T_s = \piq $.
\end{proof}
 It remains to prove Fact 1 and Fact 2.

{\em Proof of Fact 1.} 
Follows from Lemma \ref{eq_h_bc}, with  $\kappa = \sqrt{2}$, $\alpha = \tfrac{1}{4}$. %Then $C = 2  \s \sqrt{\sqrt{2}/v} =\s\,   2^{5/4}/\sqrt{v}$.

\smallskip
{\em Proof of Fact 2.}
It follows from Proposition \ref{fact_2}

\item[(8)] It remains to prove pathwise uniqueness of the solution.

 Let $(R,\Theta)$ and
$(\widetilde R,\widetilde\Theta)$ be two strong solutions up to $\tau$ in the
sense of Definition~\ref{def:sol_tau}, driven by the same Brownian motion $B$ and
with the same initial condition $(r,\theta)$. Fix $\eta>0$. On $[0,\tau_\eta]$ the
coefficients satisfy the hypotheses of Proposition~\ref{Solution}, which provides
a \emph{unique} strong solution on $[0,\tau_\eta]$; hence
\[
(R_t,\Theta_t)=(\widetilde R_t,\widetilde\Theta_t)
\qquad\text{for all } t\in[0,\tau_\eta],\ \text{a.s.}
\]
Since $\tau_\eta$ is the exit time from $\{R>\eta\}$ and the two radial paths
coincide on $[0,\tau_\eta]$, the solutions reach the level $\eta$ at the same
instant, so $\tau_\eta=\widetilde\tau_\eta$ for every $\eta>0$. Letting
$\eta\downarrow 0$ and using $\tau_\eta\nearrow\tau$ a.s., we obtain $\tau=\widetilde\tau$ and
\[
(R_t,\Theta_t)=(\widetilde R_t,\widetilde\Theta_t)
\qquad\text{for all } t\in\bigcup_{\eta>0}[0,\tau_\eta]=[0,\tau[,\ \text{a.s.}
\]
Finally, both processes are solutions in the sense of
Definition~\ref{def:sol_tau}, so by Item 2. their endpoint values are the
almost sure limits
\[
(R_\tau,\Theta_\tau)=\lim_{t\nearrow\tau}(R_t,\Theta_t),\qquad
(\widetilde R_\tau,\widetilde\Theta_\tau)
=\lim_{t\nearrow\tau}(\widetilde R_t,\widetilde\Theta_t),
\]
both of which exist a.s. Since the two processes coincide on $[0,\tau[$, these
limits agree, whence
$(R_\tau,\Theta_\tau)=(\widetilde R_\tau,\widetilde\Theta_\tau)$ a.s. This proves
pathwise uniqueness up to $\tau$.
\end{itemize}
\end{proof}

\begin{rem}\label{rem_gamma}
The role of the explicit drift  $\mu_2$ in the proof is confined to
Proposition~\ref{fact_2} (resp.\ Proposition~\ref{tau_f_sw}), through two
features: (i) near each attracting critical angle the drift is \emph{restoring},
i.e.\ it pushes $\Theta$ back toward that angle from both sides; and (ii) the
radial factor $1/r$ produces, after the change of variables $s=\tau_\rho$, a
kernel $1/\rho$ whose integral $\int_q^{1/m}\rho^{-1}\,d\rho=\log\frac{1}{mq}$
diverges as $q\downarrow 0$, dominating the bounded Brownian increment. Any drift
$\mu_2(r,\theta)=f(\theta)/r$ sharing the sign structure in (i) is therefore
handled by the same argument with no change. The case $\mu_2=f(\theta)/r^\gamma$
with $\gamma>1$ is also covered: the kernel becomes $\rho^{-\gamma}$, whose
integral diverges even faster so the restoring estimate only strengthens, although the explicit bounds in
\eqref{eq_gronwall} are modified accordingly. We do not pursue the most general
statement here.
\end{rem}

\begin{prop}\label{fact_2}
Let $\mu_1,\mu_2$ as in \eqref{eq_sail}. 
Then on
$H_m \cap L_{0,\ve, m} $,
\begin{equation}\label{lim_sup0}
\limsup_{t\nearrow \tau}  \sup_{s\geq t} |\T_s - \piq|  \leq m^{- \frac 14}.
\end{equation}

\end{prop}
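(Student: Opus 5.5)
The plan is to work pathwise on $H_m\cap L_{0,\ve,m}$ on the time window $[\tau_{\frac1m},\tau[$. By \eqref{tau_bound} applied with initial radius $\frac1m$ (strong Markov at $\tau_{\frac1m}$), this window is contained in $[\tau_{\frac1m},\tau_{\frac1m}+\frac{\sqrt2}{mv}]$. The Brownian part is then frozen into a small perturbation. Define
\[
Y_t:=\Theta_t-\sigma\bigl(B_t-B_{\tau_{\frac1m}}\bigr),\qquad t\in[\tau_{\frac1m},\tau[ .
\]
On $H_m$ we have $|\Theta_t-Y_t|\le \frac{m^{-1/4}}{2}$ throughout the window. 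Moreover $Y$ is absolutely continuous, with $\dot Y_t=\mu_2(R_t,\Theta_t)$ by Item 1 of Definition~\ref{def:sol_tau}. It therefore suffices to show that $Y_t$ eventually enters the band $I_m:=[\piq-\frac{m^{-1/4}}2,\piq+\frac{m^{-1/4}}2]$ and never leaves it before $\tau$. Then $|\Theta_s-\piq|\le m^{-1/4}$ for all $s$ close to $\tau$, which gives \eqref{lim_sup0}.

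\emph{Step 1: the restoring structure.} On $L_{0,\ve}$ there is $t_0\in[\tau_{\frac1m},\tau[$ such that $\ve<\Theta_t<\pim-\ve$ for $t\ge t_0$. There \eqref{eq_sail} gives two facts:
\begin{itemize}
\item[(a)] $\mu_2=\frac{v\sin\Theta}{R}\ge \frac{v\sin\ve}{R}$ when $\Theta<\piq$;
\item[(b)] $\mu_2=-\frac{v\cos\Theta}{R}\le-\frac{v\sin\ve}{R}$ when $\Theta\ge\piq$.
\end{itemize}
If $Y_t<\piq-\frac{m^{-1/4}}2$, then $\Theta_t<\piq$, so $Y$ increases at rate at least $\frac{v\sin\ve}{R_t}$. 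Symmetrically, if $Y_t>\piq+\frac{m^{-1/4}}2$, then $\Theta_t>\piq$ and $Y$ decreases at the same rate. At the boundary points of $I_m$ the drift points (weakly) inward. The equality case $\Theta=\piq$ at the lower edge needs care, and I would handle it by working with the band $I_m$ enlarged by an arbitrary $\delta>0$ and letting $\delta\to0$ at the end. With this, a standard comparison or first-exit argument shows that once $Y$ is in $I_m$ after $t_0$, it stays there.

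\emph{Step 2: $Y$ must enter $I_m$; this is the main obstacle.} Suppose, say, $Y_t<\piq-\frac{m^{-1/4}}2$ for all $t\in[t_0,\tau[$. On $[0,\pim[$ we have $-v\le\mu_1\le-\frac v{\sqrt2}$, so $R$ is strictly decreasing. The time change $dt=d\rho/|\mu_1|\ge d\rho/v$ in terms of $\rho=R_t$ then gives
\[
Y_t-Y_{t_0}\ \ge\ \int_{t_0}^t\frac{v\sin\ve}{R_s}\,ds\ \ge\ \sin\ve\int_{R_t}^{R_{t_0}}\frac{d\rho}{\rho}=\sin\ve\,\log\frac{R_{t_0}}{R_t}.
\]
The right-hand side tends to $+\infty$ as $t\nearrow\tau$, since $R_t\to0$. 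This contradicts the boundedness of $Y$. The case where $Y$ stays above the band is symmetric. This logarithmic divergence of $\int\rho^{-1}d\rho$ is exactly the mechanism described in Remark~\ref{rem_gamma}. The delicate point is ensuring that the drift formulas are valid on the whole interval used, i.e.\ that $\Theta$ genuinely stays in $]0,\pim[$. I would secure this by starting only at $t_0$ (supplied by $L_{0,\ve}$) rather than at $\tau_{\frac1m}$. The event $J_{0,m}$ is then only needed to guarantee that the initial position lies in the correct quadrant.

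\emph{Conclusion.} Combining Steps 1 and 2, there is $t_1<\tau$ with $Y_s\in I_m$ for all $s\in[t_1,\tau[$. Hence $|\Theta_s-\piq|\le|Y_s-\piq|+\frac{m^{-1/4}}2\le m^{-1/4}$ for all such $s$, and \eqref{lim_sup0} follows.
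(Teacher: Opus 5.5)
Your argument is correct, but it organizes the proof differently from the paper.

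\textbf{The paper's route.} The paper reparametrizes by the radial level, $s=\tau_\rho$, using $-\tau'_\rho\in[\frac1v,\frac{\sqrt2}{v}]$. It first shows that $\Theta$ hits $\frac{\pi}{4}$ at some radius $r_1$, via $\int_0^{1/m}\frac{d\rho}{\rho}=\infty$. It then splits $[0,r_1]$ into the zero set of $\Theta_{\tau_\cdot}-\frac{\pi}{4}$ and the excursions away from it. On each excursion the drift has the restoring sign, so the distance from $\frac{\pi}{4}$ is bounded by the Brownian increment since the start of that excursion, hence by $2\cdot\frac{m^{-1/4}}{2}$.

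\textbf{Your route.} You subtract the noise once, with the absolute error $|\Theta-Y|\le\frac{m^{-1/4}}{2}$. This makes the question deterministic for the absolutely continuous path $Y$. The excursion bookkeeping is replaced by entry into, and forward invariance of, a band of half-width $\frac{m^{-1/4}}{2}$. The same constant $m^{-1/4}$ comes out.

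\textbf{What each buys.} Your version obtains the logarithmic divergence directly in $t$ from $|\dot R|\le v$, without the differentiability of $q\mapsto\tau_q$. Starting at $t_0$ supplied by $L_{0,\varepsilon}$ makes $\sin\Theta\ge\sin\varepsilon$ and $\cos\Theta\ge\sin\varepsilon$ valid on the whole integration range. The paper handles that point only by ``possibly choosing a smaller $\frac1m$''. In fact $J_{0,m}$ plays no role in your argument at all, not even to fix the initial quadrant. The paper's radial reparametrization, in turn, makes the $\frac1\rho$ kernel explicit. That is the form it reuses, together with Gronwall, in the proportional-navigation case.

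\textbf{Two small corrections.}
\begin{enumerate}
\item The bound $\tau\le\tau_{\frac1m}+\frac{\sqrt2}{mv}$ is pathwise, from $\dot R\le-\frac{v}{\sqrt2}$; no strong Markov property is needed.
\item Your claim that the drift points weakly inward at the edges of $I_m$ is false at the lower edge when $\Theta=\frac{\pi}{4}$ exactly, since there $\mu_2=-\frac{v}{\sqrt2\,R}$. The claim is also unnecessary. The first-exit argument only uses the sign of $\dot Y$ on the open time interval where $Y$ is strictly outside $I_m$. There $\Theta$ lies strictly on the correct side of $\frac{\pi}{4}$, so the drift is strictly restoring. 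You can therefore drop the $\delta$-enlargement.
\end{enumerate}
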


\begin{proof}
We first prove that $\Theta_s$ hits the level $\piq$ almost surely before time $\tau$ and that, from that hitting time onward, the excursions of $\Theta_s$ around $\piq$, for sufficiently small $R_s$, have amplitude at most $m^{- \frac 14}$.  Choose $q<\frac{1}{m}$ and recall $\tau_{q} = \inf \{t> 0: \, R_t\leq q \}$, so $R_{\tau_{\frac{1}{m}}} > R_{\tau_q} = q$. 
%Let $r = \frac{1}{m}$ such that $R_{\tau_{\frac 1m}} = r$ and $\tau_{\frac 1m} = \tau_{r}$. 
 Then, by \eqref{state},
\begin{equation}\label{336}
\T_{\tau_q} = \T_{\tau_{\frac{1}{m}}} + \int_{\tau_{\frac{1}{m}}}^{\tau_q} \mu_2(R_s,\T_s) ds + \s (B_{\tau_q} - B_{\tau_{\frac{1}{m}}}).
\end{equation}
Observe that $\tau_q$ is differentiable in $q$, indeed $R_{\tau_q} = q$ so $\frac{d R_{\tau_q}}{d q}$ is well defined and by the Leibniz rule
\begin{equation}
\frac{d R_{\tau_q}}{d q} = \frac{d R_{\tau_q}}{d \tau_q} \frac{d\tau_q}{dq} = 1.
\end{equation}
This entails that, since 
\begin{align}
v \frac{\sqrt{2}}{2} \leq  -\frac{d R_t}{dt} \leq v,
\end{align}
then 
\begin{align}\label{0.01}
\frac{1}{v}\leq -\frac{d \tau_q}{dq}\leq \frac{\sqrt{2}}{v}.
\end{align}
Define $\tau^\prime_q:= \frac{d \tau_q}{dq}$.
In the integral in \eqref{336}, we do the change of variable $s= \tau_\rho$:
\begin{align}
\T_{\tau_q} &= \T_{\tau_\frac{1}{m}} + \int_{\frac{1}{m}}^q \mu_2(R_{\tau_\rho},\T_{\tau_\rho}) \tau^\prime_\rho d\rho+ \s (B_{\tau_q} - B_{\tau_\frac{1}{m}})\\
&\qquad = \T_{\tau_\frac{1}{m}} + \int_{q}^{\frac{1}{m}} \mu_2(\rho,\T_{\tau_\rho}) (-\tau^\prime_\rho) d\rho+ \s (B_{\tau_q} - B_{\tau_\frac{1}{m}}).
\end{align}

\smallskip 
\textit{Step 1. $\T_t$ hits $\piq$ a.s. before $\tau$.}
Since  $H_m\cap L_{0,\ve,m}$ occurs, suppose that $\T_{\tau_\frac{1}{m}} \in ]\frac{m^{-\frac 14}}{2}, \piq[$ as the other case $\T_{\tau_\frac{1}{m}} \in ]\piq, \pim-\frac{m^{-\frac 14}}{2}[$ is symmetric and $\T_{\tau_\frac{1}{m}}  = \piq$ is covered in \textit{Step 2}.  
 %Set $\psi_\rho = \T_{\tau_\rho}$. 
 As long as the process is smaller than $\piq$, the SDE \eqref{336} is
\begin{align}
\T_{\tau_q} &= \T_{\tau_\frac{1}{m}} + \int_q^{\frac{1}{m}} \frac{v \sin(\T_{\tau_\rho})}{\rho} (-\tau^\prime_\rho) d\rho+ \s (B_{\tau_q} - B_{\tau_\frac{1}{m}})\\
&\qquad \geq \T_{\tau_\frac{1}{m}} -  m^{-\frac{1}{4}} +  \int_q^{\frac{1}{m}} \sin(\T_{\tau_\rho}) \frac{d \rho}{\rho} \ra + \infty
 \end{align}
as $q \searrow 0$, where we used  \eqref{0.01} and, since $L_{0,\ve,m} \cap H_m$ occurs,  $\sin(\T_{\tau_\rho}) \geq \sin (\ve)$ by possibly choosing a smaller $\frac{1}{m}$.
So necessarily, $\piq$ is hit for the first time at level $r_1\in ]0,\frac{1}{m}[$.

%Now, for $q<r_1$ (recall, $q, r_1$ represent the distance from the origin), since $\mu_2$ is pushing $\theta$ towards the level $\piq$, 
%for $|\T_{\tau_q} -\piq|$ to be larger than $ \s m^{-\frac{1}{4}}$, it requires a Brownian increment larger than  $ \s m^{-\frac{1}{4}}$, which is impossible since $H_m\cap L_{0,\ve,m}$.

\smallskip 
\textit{Step 2. Excursions around $\piq$ have amplitude at most $m^{-\frac{1}{4}}$.}
Let $\xi_q:= \inf \{  l \in [q ,r_1]: \, \T_{\tau_{l}}  = \piq \}$. Fix a $q$ for which $\xi_q >q$, and assume $\T_{\tau_{q}} > \piq$. By construction, also $\T_{\tau_{\rho}} > \piq$ for $\rho \in [q,\xi_q[$. Then the equation of motion becomes
\begin{align}
\T_{\tau_q} &= \piq + \int_q^{\xi_q} \frac{ -v\cos(\T_{\tau_\rho})}{\rho} (-\tau^\prime_\rho) d\rho+ \s (B_{\tau_q}- B_{\tau_{\xi_q}}) \\
&\qquad \leq \piq +m^{-\frac{1}{4}} -  \int_q^{\xi_q} \cos(\T_{\tau_\rho}) \frac{d \rho}{\rho}\\
&\qquad  \leq \piq + m^{-\frac{1}{4}} -  \int_q^{\xi_q} \sin(\ve) \frac{d \rho}{\rho}\\
&\qquad  \ra -\infty
\end{align}
as $q \searrow 0$, where the first inequality follows from  $ -\tau^\prime_\rho \geq \frac{1}{v}$, and the second from $\cos(\T_{\tau_\rho}) \geq \cos (\pim- \ve)$. So, necessarily, there exists a level $\pi_q < q$ for which $\T_{\tau_{\pi_q}} = \piq$ and for any $ \pi_q\leq u\leq \xi_q$, then $ \T_{\tau_u} \leq \piq + m^{-\frac{1}{4}} $.

\smallskip 
\textit{Conclusion.}
Since $q  < r_1$ is generic, let $C= \{s:\, \T_{\tau_s} = \piq \}$; if $q\in C$, then $\xi_q = q$, otherwise if $q\notin C$ and $ \T_{\tau_q} > \piq$ we have proved that there exists a level $\pi_q$ such that $0 <\pi_q<q <\xi_q$ for which $\T_{\tau_{\pi_q}} = \piq$, and for any $ \pi_q\leq u\leq \xi_q$, then $ \T_{\tau_u} \leq \piq +  m^{-\frac{1}{4}} $.
 If $\T_{\tau_q} < \piq$ a similar argument holds with the reverse inequality, that is for any $ \pi_q\leq u\leq \xi_q$, then $ \T_{\tau_u} \geq \piq - m^{-\frac{1}{4}} $.

In particular we have proved that for any $ 0 \leq q \leq r_1 $,
\begin{align}
|\T_{\tau_q} - \piq| & \leq m^{-\frac{1}{4}} ,
\end{align}
This implies that
\begin{align}
\limsup_{t\nearrow \tau}  \sup_{s\geq t} |\T_s-\piq| \leq  m^{-\frac{1}{4}}.
\end{align}
\end{proof}

\section{Strong solution up to $\tau$ -- Proportional Navigation}

Before proving the second and last main result of the work, we need a preliminary proposition because the radial drift $\mu_1$ of the Proportional Navigation model \eqref{eq_pn} may be null and therefore a uniform bound on the time to reach the origin  $\tau$, as in \eqref{tau_bound}, is not available.

Nonetheless, we can prove that $\tau$ is finite a.s. This is a weaker result than the corresponding one for the Sailboat model but is enough to apply the same arguments of Items 1) --5) in the Proof of Theorem \ref{Solution_sail}.

\begin{prop}\label{tau_f}
Let the SDE \eqref{state} with coefficients \eqref{eq_pn}. Then, the hitting time $\tau := \inf\{t \geq 0 : R_t = 0\}$ is finite almost surely.
\end{prop}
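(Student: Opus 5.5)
The plan is to argue by contradiction on the event $\{\tau=\infty\}$, using one observation and a Lyapunov-type function. The observation is that, with the identification $\theta\sim\theta-\pi$ on the rear hemisphere, \eqref{eq_pn} gives $\mu_1(\theta)=-v|\cos\theta|$ on both branches. Hence $R_t$ is nonincreasing, and for every $t<\tau$ we have
\begin{equation}
v\int_0^t|\cos\Theta_s|\,ds = r-R_t\le r .
\end{equation}
So on $\{\tau=\infty\}$ the process $\Theta$ spends finite total time where $|\cos\Theta|$ is bounded below. In particular, the time spent in $\{|\cos\Theta|\ge \tfrac12\}$ is at most $2r/v$. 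Since the total time is infinite, $\Theta$ must spend infinite time in the band $\{|\cos\theta|<\tfrac12\}$ around the repelling angles $\tfrac\pi2+j\pi$. The work is to show this is impossible.

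To do this I would apply It\^o's formula, on $[0,t]$ for $t<\tau_\eta$ (legitimate by Proposition~\ref{Solution}), to the $\pi$-periodic $C^2$ function $Y_t:=\cos^2\Theta_t\in[0,1]$. Differentiating gives
\begin{equation}
dY_t=-\sin(2\Theta_t)\,\mu_2(R_t,\Theta_t)\,dt-\sigma^2\cos(2\Theta_t)\,dt-\sigma\sin(2\Theta_t)\,dB_t .
\end{equation}
The key sign check is that the singular term is nonnegative on both branches. On $[-\tfrac\pi2,\tfrac\pi2]$ it equals $2(N-1)v\cos\Theta\sin^2\Theta/R$ with $\cos\Theta\ge0$. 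On $]\tfrac\pi2,\tfrac{3\pi}2[$ it equals $-2(N-1)v\cos\Theta\sin^2\Theta/R$ with $\cos\Theta\le0$. In both cases it is $2(N-1)v|\cos\Theta|\sin^2\Theta/R\ge 0$. Here the drift repels from $\tfrac\pi2+j\pi$, which only increases $Y$, so this term can be dropped in a lower bound. For the It\^o term, on the band $\{\cos^2\theta<\tfrac14\}$ we have $-\cos 2\theta=1-2\cos^2\theta>\tfrac12$, and everywhere $-\cos2\theta\ge-1$. Writing $M_t:=-\sigma\int_0^t\sin(2\Theta_s)\,dB_s$, this yields for all $t<\tau$
\begin{equation}
1\ \ge\ Y_t\ \ge\ Y_0+\frac{\sigma^2}{2}\Bigl(t-\frac{2r}{v}\Bigr)-\sigma^2\,\frac{2r}{v}+M_t .
\end{equation}

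The conclusion is then a martingale growth argument, which I expect to be the only delicate step. I would stop things at $\tau_\eta$ and let $\eta\downarrow0$ so that $M$ is defined on $[0,\tau[$. Since $\langle M\rangle_t\le\sigma^2 t$, the strong law for continuous local martingales applies. On $\{\langle M\rangle_\infty<\infty\}$, $M_t$ converges; on $\{\langle M\rangle_\infty=\infty\}$, $M_t/\langle M\rangle_t\to0$. Either way $M_t=o(t)$ a.s. on $\{\tau=\infty\}$, so the right-hand side above tends to $+\infty$, contradicting $Y_t\le 1$. Hence $\mathbb P(\tau=\infty)=0$.

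Two routine points remain. First, the bound $\int_0^t|\cos\Theta_s|\,ds\le r/v$ must be used only for $t<\tau$, where Definition~\ref{def:sol_tau} applies. Second, the boundaries $\theta=\pm\tfrac\pi2$ between branches cause no trouble, because $Y$ is globally smooth and $\mu_1,\mu_2$ are bounded on $\{R\ge\eta\}$ there.
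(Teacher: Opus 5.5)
Your proof is correct, and it reaches the contradiction through a different mechanism than the paper's.

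\textbf{The paper's route.} The paper applies Peskir's change-of-variable formula with local time on curves to the kinked function $|\cos\Theta_t|$. The kinks at $\tfrac{\pi}{2}+k\pi$ produce a nonnegative local-time term, which is discarded because of its sign. The It\^o correction $-\tfrac{\sigma^2}{2}\int_0^t|\cos\Theta_s|\,ds$ stays bounded on $\{\tau=\infty\}$. The linear growth is supplied by the singular drift itself: $\int_0^t v\sin^2\Theta_s/R_s\,ds\ge \tfrac{v}{\sigma^2R_0}\langle M\rangle_t$, where the paper's martingale has $\langle M\rangle_t=\sigma^2\int_0^t\sin^2\Theta_s\,ds\to\infty$.

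\textbf{Your route.} You use the globally $C^2$ function $\cos^2\Theta$, so the ordinary It\^o formula suffices. You need no local times and no argument about symmetric derivatives or zero occupation time at the levels $\tfrac{\pi}{2}+k\pi$. You discard the drift contribution $2(N-1)v|\cos\Theta|\sin^2\Theta/R\ge 0$ altogether. The growth then comes from the convexity of $\cos^2$ at the repelling angles, i.e.\ from the noise, combined with the occupation-time bound.

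\textbf{Trade-offs.} Your route buys simplicity and robustness. Only the sign structure of $\mu_2$ is used, so every $N\ge 2$ is covered at once (the paper writes out only $N=2$). The same holds for any drift repelling from $\tfrac{\pi}{2}+j\pi$, e.g.\ with $1/r^\gamma$ in place of $1/r$. The paper's choice of $|\cos|$ is what keeps the drift contribution nondegenerate at $\tfrac{\pi}{2}+k\pi$. For $\cos^2$ this contribution vanishes there, because the derivative does. So in the paper the restoring drift, rather than the noise-induced convexity, drives the contradiction, at the price of the local-time machinery. Your argument instead uses $\sigma>0$ essentially, through the correction term.

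\textbf{Two small remarks.} First, your $\langle M\rangle_t=\sigma^2\int_0^t\sin^2(2\Theta_s)\,ds$ need not diverge, since the integrand vanishes at $\tfrac{\pi}{2}+j\pi$. Your treatment of both cases $\langle M\rangle_\infty<\infty$ and $\langle M\rangle_\infty=\infty$ is therefore genuinely needed, and it is correct. Second, to invoke the strong law cleanly, define $M$ on all of $[0,\infty)$ with integrand $\sin(2\Theta_s)\mathbf{1}_{\{s<\tau\}}$; this coincides with your $M$ on $\{\tau=\infty\}$.
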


\begin{proof}
By the sign convention in \eqref{eq_pn} $\mu_1(\T) = -v|\cos\T|$, so
\begin{equation}\label{eq:radial}
R_t = R_0 - v\int_0^t|\cos\T_s|\,ds \geq 0,
\end{equation}
and $t \mapsto R_t$ is monotone nonincreasing. Suppose for contradiction that $\P(\tau = \infty) > 0$. On $\{\tau = \infty\}$, $R_t > 0$ for all $t$, so $R_t \searrow R_\infty \in [0, R_0]$ as $t \to \infty$, and
\begin{equation}\label{eq:I_finite}
I_\infty := \int_0^\infty|\cos\T_s|\,ds = \frac{R_0 - R_\infty}{v} < \infty.
\end{equation}

We apply Peskir's change-of-variable formula with local time on curves
\cite{Peskir2005} to $g(\Theta_t)$, where $g(\theta):=|\cos\theta|$. The function
$g$ is continuous, and is $C^2$ on each side of the curves
$\theta=\tfrac{\pi}{2}+k\pi$ ($k\in\mathbb Z$), where it has a corner; on either
side
\[
g'(\theta)=-\operatorname{sgn}(\cos\theta)\,\sin\theta,
\qquad
g''(\theta)=-|\cos\theta|.
\]
The formula applies to such a piecewise-$C^{1,2}$ function and to the continuous
semimartingale $\Theta$, with no regularity required of the coefficients of
$\Theta$; in particular the discontinuity of $\mu_2$ at $\pm\tfrac{\pi}{2}$ is
immaterial. On $\{\tau=\infty\}$, the radius $R$ is non-increasing (since
$ dR_s=-v|\cos\Theta_s|\le 0$) and stays strictly positive, so for $s\le t$
one has $R_s\ge R_t>0$; hence the angular drift is bounded,
$|\mu_2(R_s,\Theta_s)|\le v/R_t$ for $s\le t$, and
$\int_0^t|\mu_2(R_s,\Theta_s)|\,ds\le vt/R_t<\infty$. Thus $\Theta$ is a
continuous semimartingale on each $[0,t]$, and the change-of-variable formula
below applies.

 It gives
\begin{align}
g(\Theta_t)-g(\Theta_0)
&=\int_0^t \tfrac12\bigl(g'(\Theta_s+)+g'(\Theta_s-)\bigr)\,d\Theta_s\\
&+\frac12\int_0^t g''(\Theta_s)\, \mathbf 1_{\{ \T_s \notin \{\pim + k\pi \}_{k\in \Z}\}}\,d\langle\Theta\rangle_s\\
&+\frac12\sum_{k\in\mathbb Z}\int_0^t
\bigl(g'(\Theta_s+)-g'(\Theta_s-)\bigr)\,dL_s^{\,\frac{\pi}{2}+k\pi}(\Theta),
\end{align}
where $L_s^{b}(\Theta)$ denotes the local time of $\Theta$ at the level $b$.

We evaluate the three terms. In the first integral, the symmetric average
$\tfrac12\bigl(g'(\Theta_s+)+g'(\Theta_s-)\bigr)$ coincides with $g'(\Theta_s)$
for a.e.\ $s$, since $\Theta$ spends zero time on the curves
$\{\theta=\tfrac{\pi}{2}+k\pi\}$ (a continuous semimartingale with nondegenerate
martingale part occupies any fixed level for zero time). Hence the first integral
is the ordinary It\^o integral of $g'(\Theta_s)$ against
$d\Theta_s=\mu_2(R_s,\Theta_s)\,ds+\sigma\,dB_s$. With $N=2$ the angular drift is
$\mu_2(r,\theta)=-\operatorname{sgn}(\cos\theta)\,\tfrac{v\sin\theta}{r}$, and the
two sign factors cancel:
\[
g'(\Theta_s)\,\mu_2(R_s,\Theta_s)
=\bigl(-\operatorname{sgn}(\cos\Theta_s)\sin\Theta_s\bigr)
 \Bigl(-\operatorname{sgn}(\cos\Theta_s)\tfrac{v\sin\Theta_s}{R_s}\Bigr)
=\frac{v\sin^2\Theta_s}{R_s},
\]
where we used $\operatorname{sgn}(\cos\Theta_s)^2=1$ for a.e.\ $s$ \ (the
exceptional set $\{\cos\Theta_s=0\}$ has zero Lebesgue measure). The
martingale part of the first integral is $-M_t$, with
$M_t:=\sigma\int_0^t\operatorname{sgn}(\cos\Theta_s)\sin\Theta_s\,dB_s$ and
$\langle M\rangle_t=\sigma^2\int_0^t\sin^2\Theta_s\,ds$.

Concerning the second term, since $g''(\theta)=-|\cos\theta|$ off the
curves and $d\langle\Theta\rangle_s=\sigma^2\,ds$,
\[
\frac12\int_0^t g''(\Theta_s)\, \mathbf 1_{\{ \T_s \notin \{\pim + k\pi \}_{k\in \Z}\}}\,d\langle\Theta\rangle_s
=-\frac{\sigma^2}{2}\int_0^t |\cos\Theta_s|\,ds.
\]

The third term is the local times contribution. At each curve
$b_k=\tfrac{\pi}{2}+k\pi$ the derivative jumps upward,
\[
g'(b_k+)-g'(b_k-)=(+1)-(-1)=2,
\]
so, setting $L_t:=\sum_{k\in\mathbb Z}L_t^{\,\frac{\pi}{2}+k\pi}(\Theta)$, this term
equals $L_t$ and is manifestly nonnegative: a sum of (nonnegative) local times
weighted by the (positive) upward jumps of $g'$. Collecting the three terms,
\begin{equation}\label{eq:ito_tanaka}
|\cos\Theta_t|-|\cos\Theta_0|
=\int_0^t \frac{v\sin^2\Theta_s}{R_s}\,ds
-\frac{\sigma^2}{2}\int_0^t |\cos\Theta_s|\,ds
+L_t-M_t.
\end{equation}

On   $ \{\tau = \infty \}$,  from $\sin^2\T_s \geq 1 - |\cos\T_s|$ and \eqref{eq:I_finite},
$$\langle{M}\rangle_t \geq \sigma^2(t - I_\infty) \to \infty,$$
so ${M}_t/\langle{M}\rangle_t \to 0$ a.s.\ by the Law of Large Numbers for continuous martingales \cite[Chap. 5, Ex. (1.16)]{revuzYor}

Rearranging \eqref{eq:ito_tanaka},
\begin{equation}\label{eq_fin}
\int_0^t\frac{v\sin^2\T_s}{R_s}\,ds = |\cos\T_t| - |\cos\T_0| + \tfrac{\sigma^2}{2}\int_0^t|\cos\T_s|\,ds - L_t + {M}_t.\end{equation}

Dividing \eqref{eq_fin} by $\langle M\rangle_t$ and letting $t\to\infty$, we
examine each term on the right-hand side:
\begin{enumerate}
\item $\dfrac{|\cos\T_t|-|\cos\T_0|}{\langle M\rangle_t}\to 0$, since the
numerator is bounded and $\langle M\rangle_t\to\infty$;
\item $\int_0^t|\cos\T_s|\,ds\le I_\infty<\infty$ is bounded, so this term
divided by $\langle M\rangle_t$ tends to $0$;
\item $-\dfrac{L_t}{\langle M\rangle_t}\le 0$, since $L_t\ge 0$;
\item $\dfrac{M_t}{\langle M\rangle_t}\to 0$.
\end{enumerate}
Taking $\limsup_{t\to\infty}$ on the right-hand side, terms (1), (2), (4) vanish
and term (3) is nonpositive, so
\[
\limsup_{t\to\infty}\frac{1}{\langle M\rangle_t}
\left(|\cos\T_t|-|\cos\T_0|+\tfrac{\sigma^2}{2}\int_0^t|\cos\T_s|\,ds-L_t+M_t\right)
\le 0.
\]
On the other hand, the left-hand side of \eqref{eq_fin} satisfies, using
$R_s\le R_0$ and $\langle M\rangle_t=\sigma^2\int_0^t\sin^2\T_s\,ds$,
\[
\frac{1}{\langle M\rangle_t}\int_0^t\frac{v\sin^2\T_s}{R_s}\,ds
\ge \frac{v}{\sigma^2 R_0}>0
\qquad\text{for all }t,
\]
hence its $\liminf$ as $t\to\infty$ is at least $v/(\sigma^2 R_0)>0$. Since the
two sides of \eqref{eq_fin} are equal, we obtain
\[
0<\frac{v}{\sigma^2 R_0}\le \liminf_{t\to\infty}(\text{LHS})
=\liminf_{t\to\infty}(\text{RHS})\le \limsup_{t\to\infty}(\text{RHS})\le 0,
\]
a contradiction. Therefore $\P(\tau=\infty)=0$, i.e.\ $\tau<\infty$ a.s.
%
%
%Dividing by $\langle{M}\rangle_t$ and letting $t \to \infty$:
%\begin{enumerate}
%\item left hand side: since $R_s\le R_0$,
%\[
%\int_0^t\frac{v\sin^2\T_s}{R_s}\,ds
%\ge \frac{v}{R_0}\int_0^t\sin^2\T_s\,ds
%=\frac{v}{\sigma^2 R_0}\,\langle M\rangle_t,
%\]
%so the ratio is $\ge \dfrac{v}{\sigma^2 R_0}>0$;
%\item $\frac{|\cos\T_t| - |\cos\T_0|}{\langle{M}\rangle_t} \ra 0$;
%\item $\int_0^t|\cos\T_s|\,ds \leq I_\infty < \infty$ is bounded, so this term divided by $\langle{M}\rangle_t$ converges  to 0;
%\item $-L_t \leq 0$, so $-\frac{L_t}{\langle{M}\rangle_t} \leq 0$.
%\item $\frac{{M}_t}{\langle{M}\rangle_t} \to 0$.
%\end{enumerate}
%Hence, the left hand side of \eqref{eq_fin} is strictly larger than $0$, while the right hand side is non- positive, resulting in a  contradiction.
\end{proof}
\medskip

\begin{thm}\label{Solution_pn}
Let $\mu_1,\mu_2$ be as in \eqref{eq_pn}. Then for every $N\geq 2$ and every $(r,\t)\in \R_+ \times \R$
there exists a unique strong solution to \eqref{state} in $[0,\tau]$  in the sense of Definition \ref{def:sol_tau}.
\end{thm}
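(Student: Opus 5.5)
I will follow the architecture of the proof of Theorem~\ref{Solution_sail}, replacing the uniform bound \eqref{tau_bound} by Proposition~\ref{tau_f}. Item 1. of Definition~\ref{def:sol_tau} comes from Proposition~\ref{Solution} on each $[0,\tau_\eta]$. Pathwise uniqueness follows verbatim as in Item (8): uniqueness on $[0,\tau_\eta]$ for every $\eta$, hence $\tau_\eta=\widetilde\tau_\eta$, coincidence on $[0,\tau[$, and equality of the limits. Since $R$ is nonincreasing ($\mu_1=-v|\cos\theta|$ after the reversal convention) and $\tau<\infty$ a.s., we get $R_t\searrow 0$. So the whole task is to show that $\lim_{t\nearrow\tau}\Theta_t$ exists a.s.

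For the angular limit I rerun Items (1)--(5), using only that $\tau<\infty$ a.s. (Proposition~\ref{tau_f}) and that $B$ is uniformly continuous on $[0,\tau]$. The relevant levels are now the repelling angles $\tfrac{\pi}{2}+k\pi$ and the attracting angles $k\pi$. On $]-\tfrac\pi2,\tfrac\pi2[$ the drift $-(N-1)v\sin\theta/r$ points toward $0$, and on $]\tfrac\pi2,\tfrac{3\pi}2[$ it points toward $\pi$. Hence near $\tfrac\pi2+k\pi$ the drift pushes away from the level on both sides. Any crossing of a band around such a level therefore requires a Brownian increment of fixed size, which can happen only finitely often before $\tau$. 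This gives the analogues of Items (1)--(3): $\Theta$ does not escape to $\pm\infty$, and it eventually stays in one cell $[\tfrac\pi2+(k-1)\pi,\tfrac\pi2+k\pi]$. It also gives the analogue of Item (5): $\liminf$ and $\limsup$ cannot sit at a repelling endpoint while the other lies in the interior. So a.s. either $\Theta_t\to\tfrac\pi2+k\pi$, or $\Theta$ is eventually confined to a compact subinterval $[k\pi-\tfrac\pi2+\ve,\,k\pi+\tfrac\pi2-\ve]$. There is no analogue of Item (6), since the drift never vanishes on an interval.

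The main step is the analogue of Item (7): on this confinement event, $\Theta_t\to k\pi$ (take $k=0$). On the event, $|\cos\Theta_t|\ge\sin\ve$ eventually, so $-\dot R_t\ge v\sin\ve$. This supplies, \emph{on the favorable event only}, the uniform bound $\tau-\tau_{1/m}\le \frac{1}{mv\sin\ve}$; this is exactly the role of Proposition~\ref{tau_f_sw}. With $\kappa=1/\sin\ve$ and $\alpha=\tfrac14$, Lemma~\ref{eq_h_bc} makes $H_m$ occur for all large $m$, and I then prove the analogue of Proposition~\ref{fact_2} via the change of variables $s=\tau_\rho$. Here $-\tau'_\rho=1/(v|\cos\Theta_{\tau_\rho}|)\in[1/v,\,1/(v\sin\ve)]$, so
\[
\Theta_{\tau_q}=\Theta_{\tau_{1/m}}-(N-1)\int_q^{1/m}\frac{\sin\Theta_{\tau_\rho}}{|\cos\Theta_{\tau_\rho}|}\,\frac{d\rho}{\rho}+\sigma\bigl(B_{\tau_q}-B_{\tau_{1/m}}\bigr).
\]

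The obstacle, flagged in the discussion of the models, is that the drift vanishes at the critical angle $0$. The Step~1/Step~2 argument of Proposition~\ref{fact_2} relies on a drift bounded away from $0$, so it cannot be copied; the dichotomy ``hit $0$ or diverge'' must be replaced by a quantitative one. For a fixed $\delta>0$, the integrand satisfies $|\sin\Theta|\ge\sin\delta$ whenever $|\Theta|\ge\delta$, while $\int_q^{1/m}d\rho/\rho\to\infty$ and the Brownian term is at most $m^{-1/4}/2$ on $H_m$. So $\Theta_{\tau_q}$ must enter $[-\delta,\delta]$. Afterwards, each excursion beyond $\delta+m^{-1/4}$ would need a Brownian increment against a restoring drift, which $H_m$ excludes. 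This gives $\limsup_{t\nearrow\tau}|\Theta_t|\le\delta+m^{-1/4}$, and letting $m\to\infty$ and then $\delta\downarrow0$ gives $\Theta_t\to0$. Combining the cases, $\lim_{t\nearrow\tau}\Theta_t\in\{\tfrac\pi2+k\pi\}\cup\{k\pi\}$ exists a.s., which completes Item 2. of Definition~\ref{def:sol_tau}.
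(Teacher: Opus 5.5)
Your proof is correct. Its skeleton is the paper's: Items (1)--(5) rerun with Proposition~\ref{tau_f} in place of \eqref{tau_bound}, an analogue of Item (7) on the confinement event via $s=\tau_\rho$ and Lemma~\ref{eq_h_bc}, and uniqueness as in Item (8). You depart from the paper in what replaces Proposition~\ref{tau_f_sw}.

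The paper wants an $\varepsilon$-independent window $\tau-\tau_{1/m}<\frac{4}{mv}$. It gets this by a bootstrap: Phase~1 pushes $|\Theta|$ below $\frac\pi4$ in time $O(\frac{1}{mv})$, and Phase~2 keeps $|\Theta|\le\frac\pi3$, so that $-\dot R\ge\frac v2$. It then controls $\Theta$ through $\tan x\ge x$ and the Gronwall bound \eqref{eq_gronwall}.

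You instead read the window $\frac{1}{mv\sin\varepsilon}$ directly off $|\cos\Theta|\ge\sin\varepsilon$. The only cost is an $\varepsilon$-dependent $\kappa$. That is harmless, but you should say explicitly that Lemma~\ref{eq_h_bc} is applied for $\varepsilon=1/n$, $n\in\N$, and the countably many full-measure events are intersected. You then handle the vanishing of the drift at $0$ by the qualitative enter-$[-\delta,\delta]$/excursion argument.

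Your route is shorter, and also sturdier. The bound \eqref{eq_gronwall} does not follow from \eqref{eq_t3}, because the integral enters with a minus sign and no comparison principle applies. With $A:=\Theta_{\tau^1}+m^{-1/4}$, the constant $f\equiv A/2$ on $[r_1/e,r_1]$ satisfies $f(q)\le A-\int_q^{r_1}f(\rho)\rho^{-1}\,d\rho$, but fails $f(q)\le Aq/r_1$ at $q=r_1/e$. So the paper's Case (a) really needs your $\delta$-argument to reach \eqref{lim_sup}.

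Likewise, your dichotomy, which leaves open $\Theta_t\to\frac\pi2+k\pi$, is exactly what Items (1)--(5) deliver. The paper asserts $\P(\bigcup_j L^1_j)=1$ without ruling out limits at the repelling angles. Either version suffices for Item 2 of Definition~\ref{def:sol_tau}. Carrying the factor $(N-1)$ explicitly also spares you the reduction to $N=2$.

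Finally, your argument does not even need $H_m$. Since $\tau<\infty$, continuity of $B$ at $\tau$ makes all late Brownian increments smaller than any $\eta>0$. Moreover, $R_s\le v(\tau-s)$ already gives $\int^{\tau}R_s^{-1}\,ds=\infty$. Hence Lemma~\ref{eq_h_bc} and the window bound can be dropped altogether.
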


\begin{proof}

We restrict to $N=2$; the case $N>2$ follows from the same argument. Indeed, the
angular drift $\mu_2(r,\theta)=-(N-1)v\sin(\theta)/r$ differs from the $N=2$ case
only by the factor $(N-1)\ge 1$, which multiplies the restoring term: the
attraction toward the critical angles $\{j\pi\}$ is therefore at least as strong
as for $N=2$. Concretely, in the estimates of Proposition~\ref{tau_f_sw} the
quantity $\int \mu_2$ is scaled by $(N-1)$, so the lower bounds forcing
$\Theta\to j\pi$ only improve, while the radial dynamics $\mu_1$ are unchanged.
The conclusion thus holds for every $N\ge 2$.

We first  prove that the conditions of Definition \ref{def:sol_tau} are satisfied.  Item 1. follows from Proposition \ref{Solution}, so it remains to prove Item 2. 

Since $R_t$ is non-increasing, it has a limit, so it is enough to prove that $\lim_{t\nearrow \tau} \T_t$ exists a.s. 
We will actually prove that $\lim_{t\nearrow \tau} \T_t \in \cup_j \{j\pi \}$. 
Observe that in the Proportional Navigation model  we do not have intervals in $[0,2\pi[$ where $\mu_2 \equiv 0$ so there is only one type of limiting behavior.

Using Proposition \ref{tau_f}, it is straightforward to adapt Item 1)--5) of the proof of Theorem \ref{Solution_sail} to obtain that $\P(\cup_{j\in \Z} L_j) = 1$, where 
\begin{align}
 L^1_j =  \left\{  \frac{ (2j-1)\pi}{2} < \liminf_{t\nearrow \tau} \T_t \leq \limsup_{t
\nearrow \tau} \T_t < \frac{(2j+1)\pi}{2}  \right\},
\end{align}
namely $L^1_0 = ]-\pim,\pim[$.
\smallskip

We now proceed as in  Item 7)  of the proof of Theorem \ref{Solution_sail}, in particular,  prove that if $L_j^1$ occurs, then $\lim_{t \ra \tau} = j \pi$.
Using the symmetries of the problem, we assume that $j=0$.

Let
\begin{align}
&L^1_{0,\ve} = \left\{ -\pim + \ve < \liminf_{t\nearrow \tau} \T_t  \leq \limsup_{t\nearrow \tau} \T_t< \pim -\ve \right\}, \\
&J^1_{0,m}= \left\{  -\frac{\pi}{2} +  \frac{m^{-\frac{1}{4}}}{2} <\T_{\tau_{\frac 1m}} <  \frac{\pi}{2} -  \frac{ m^{-\frac{1}{4}}}{2}  \right\} \label{eq_jm1}.
 \end{align}
then $L^1_0 = \cup_\ve L^1_{0,\ve}$.
Define also
 \begin{align}
 L^1_{0,\ve,m} = L^1_{0,\ve} \cap  J^1_{0,m},
 \end{align}
so that
\begin{align}
L^1_0 = \bigcup_{\ve >0} \bigcup_{n\in \N^\star} \bigcap_{m\geq n} L^1_{0,\ve,m}.
 \end{align}
Finally, set
\begin{align}
&H^1_m = \left\{ \sup_{\tau_{\frac 1m} \leq s \leq \tau_{\frac 1m} + \frac{4}{mv}} 
|B_s - B_{\tau_{\frac 1m}}| \leq \frac{m^{-\frac 14}}{2\sigma} \right\}, \label{h1_def}
\end{align}
%Item 7) of the proof of Theorem \ref{Solution0} is redundant since it considers the case $\t \in ]\pim,\pi[ \cup ]\pitq, 2\pi[$ where  $\mu_2(\t) = 0$, or, in ger

The reader is invited to compare $H^1_m$ (resp. $J^1_{0,m}$) with $H_m$ (resp. $J_{0,m}$) in \eqref{eq_hm} (resp. \eqref{eq_jm}).

Given the validity of the following facts, the conclusion follows as in Item 7) of the proof of Theorem \ref{Solution_sail}.
\begin{enumerate}
\item Fact 1. $\sum_{m=1}^\infty \P\left((H^1_m)^c\right) < \infty$, so for  $m$ large enough, $H^1_m$ occurs, where $H^1_m$ is introduced in \eqref{h1_def}.
 \item   Fact 2. On $H^1_m \cap L^1_{0,\ve,m}$,  $\tau \leq \tau_{\frac 1m} + \frac{4}{mv}$ and  $ \limsup_{t \nearrow \tau} \sup_{s\geq t} |\T_s| \leq   m^{-\frac 14} $.
\end{enumerate}

{\em Proof of Fact 1.} 
Follows from Lemma \ref{eq_h_bc}, with  $\kappa = 4$, $\alpha = \tfrac{1}{4}$, $\sigma$ as given.% Then $C = \frac{4\sigma}{\sqrt{v}}$.

\smallskip
{\em Proof of Fact 2.}  See Proposition \ref{tau_f_sw}. 

\smallskip
Pathwise uniqueness follows by the same argument used in Item 8) of the proof of Theorem \ref{Solution_sail}.
\end{proof}

We now show that, on the event that the $\T$ stays away from $\pm\pi/2$ and the 
noise increment is small ($H^1_m \cap L^1_{0,\ve,m}$), the time to hitting the target satisfies 
$\tau \leq \tau_{\frac 1m} + \frac{4}{mv}$, and $\T_t$ is driven to within $m^{-\frac 14}$ 
of $0$ as $t \nearrow \tau$. This is required to apply Item 7) of the proof of Theorem \ref{Solution_pn}.

Moreover, the generalization discussed in Remark \ref{rem_gamma} applies verbatim here, with $\frac{1}{\rho}$ replaced by the corresponding kernel.

\begin{prop}\label{tau_f_sw}
Let $\mu_1,\mu_2$ as in \eqref{eq_pn}. 
There exists $M_0$ such that, on $H^1_m \cap L^1_{0,\ve, m}$,
for all $m \geq M_0$, $\tau \leq \tau_{\frac 1m} + \frac{4}{mv}$, and moreover
\begin{equation}\label{lim_sup}
\limsup_{t\nearrow \tau}  \sup_{s\geq t} |\T_s|  \leq  m^{-\frac 14}.
\end{equation}
\end{prop}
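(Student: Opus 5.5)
The plan is to run a bootstrap on the random window $[\tau_{\frac 1m},\,\tau_{\frac 1m}+\frac{4}{mv}]$. On this window $H^1_m$ controls the noise: $\sigma|B_t-B_{\tau_{\frac 1m}}|\le \tfrac12 m^{-\frac14}$. The aim is to show that, while the noise stays this small, the drift brings $\Theta$ into a cone where $\cos\Theta$ is bounded below, fast enough that $R$ reaches $0$ before the window closes. Once the time bound is known, $H^1_m$ controls the Brownian increments all the way up to $\tau$. The excursion estimate then follows exactly as in Proposition~\ref{fact_2}.

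\emph{Step 1 (angular contraction in time).} On $J^1_{0,m}$ we have $|\Theta_{\tau_{\frac 1m}}|<\pim-\tfrac12 m^{-\frac14}$. Write $\Theta_t=Z_t+\sigma(B_t-B_{\tau_{\frac 1m}})$ with $dZ_t=-v\sin(\Theta_t)/R_t\,dt$, and note $R_t\le \frac1m$ for $t\ge\tau_{\frac 1m}$.
\begin{itemize}
\item \emph{No escape.} Whenever $|\Theta_t|\ge m^{-\frac14}$ (and $<\pim$), the drift of $Z$ has sign opposite to $\Theta_t$. Hence $|Z|$ cannot increase while $|\Theta|$ is large. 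It follows that $|\Theta_t|\le \max\bigl(|\Theta_{\tau_{\frac 1m}}|,m^{-\frac14}\bigr)+\tfrac12 m^{-\frac14}$ on the window, so $\Theta$ never reaches $\pm\pim$ there.
\item \emph{Fast entry into the cone.} While $|\Theta_t|\ge \piq$ one has $|dZ_t/dt|\ge v\sin(\piq)\,m$. Hence after a time at most $\frac{\pi/2}{mv\sin(\pi/4)}\le \frac{2}{mv}$ we get $|Z|\le\piq$. After that, $|\Theta|\le \piq+m^{-\frac14}\le\frac{\pi}{3}$ for $m\ge M_0$, by the same monotonicity argument.
\end{itemize}

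\emph{Step 2 (time bound).} After entering the cone, $\cos\Theta_t\ge\tfrac12$, so $dR_t/dt\le -v/2$. Since $R\le\frac1m$ at that point, $R$ reaches $0$ within a further $\frac{2}{mv}$. Adding the first phase gives $\tau\le\tau_{\frac 1m}+\frac{4}{mv}$. The exit time from the cone is a stopping time, so the argument is a clean stopping-time bootstrap. $M_0$ is chosen so that $m^{-\frac14}$ is small compared with $\frac{\pi}{3}-\piq$ and with $\ve$.

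\emph{Step 3 (excursion bound).} Now every increment of $B$ on $[\tau_{\frac 1m},\tau]$ is controlled by $H^1_m$. Let $r_1$ be the radius at which the cone is entered. Reparametrize by the radius $s=\tau_\rho$ as in Proposition~\ref{fact_2}, with $-\tau'_\rho=1/(v\cos\Theta_{\tau_\rho})\ge 1/v$. The drift term becomes $-\int \tan(\Theta_{\tau_\rho})\,\frac{d\rho}{\rho}$, and on the cone $|\tan\theta|\ge c|\sin\theta|$ with the correct sign.
\begin{itemize}
\item Take $\Theta_{\tau_{r_1}}\ne0$. If $\Theta$ never reached $0$, its distance from $0$ would stay above some fixed positive amount, so $|\tan\Theta|$ would be bounded below. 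The integral $\int_q^{r_1}\frac{d\rho}{\rho}$ diverges as $q\downarrow0$ and dominates the bounded noise, so $\Theta$ must hit $0$.
\item Afterwards, take any $q$ with $\Theta_{\tau_q}\neq0$ and let $\xi_q$ be the last zero above $q$. On $[q,\xi_q]$ the drift pushes toward $0$, so $|\Theta_{\tau_q}|\le\sigma|B_{\tau_q}-B_{\tau_{\xi_q}}|\le m^{-\frac14}$. This gives \eqref{lim_sup}.
\end{itemize}

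\emph{Main obstacle.} The hardest part will be Step 1. Near $\pm\pim$ the radial speed degenerates, so the time bound cannot be read off directly as in \eqref{tau_bound}. One must exploit that the angular drift is of order $m$ there, so $\Theta$ leaves the bad region on a timescale $O(1/(mv))$. At the same time, the noise and the monotonicity argument have to stay consistent without knowing a priori that the window contains $\tau$. I would handle this with the stopping time $\inf\{t\ge\tau_{\frac 1m}: |\Theta_t|\ge \pim-\tfrac14 m^{-\frac14}\}\wedge(\tau_{\frac 1m}+\frac4{mv})$ and show that the first alternative never occurs.
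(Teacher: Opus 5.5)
Your overall architecture is the paper's. You run a bootstrap on the window $[\tau_{\frac1m},\tau_{\frac1m}+\frac{4}{mv}]$ controlled by $H^1_m$, show a fast exit from $\{|\Theta|>\frac\pi4\}$ at angular speed at least $mv/\sqrt2$, use radial speed at least $v/2$ afterwards, and finish with an excursion bound in the radius variable. Your containment argument through $Z_t=\Theta_t-\sigma(B_t-B_{\tau_{1/m}})$ is cleaner than the paper's. It keeps $\Theta$ in $]-\frac\pi2,\frac\pi2[$ using only $H^1_m\cap J^1_{0,m}$. The paper instead reads this confinement off $L^1_{0,\varepsilon,m}$, which constrains $\Theta$ only at $\tau_{\frac1m}$ and asymptotically as $t\nearrow\tau$. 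However, two steps fail as written.

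First, your entry-time estimate is false: $\frac{\pi/2}{mv\sin(\pi/4)}=\frac{\pi}{\sqrt2\,mv}\approx\frac{2.22}{mv}>\frac{2}{mv}$. With it your total is about $\frac{4.22}{mv}$, so you do not get $\tau$ strictly inside the window. The bootstrap then does not close, because $H^1_m$ gives no control of $B$ after $\tau_{\frac1m}+\frac{4}{mv}$. The repair is that $|Z|$ starts below $\frac\pi2$ and only has to fall to $\frac\pi4$. Also, the rate bound should be conditioned on $|Z_t|>\frac\pi4$, not on $|\Theta_t|\ge\frac\pi4$; the former forces $|\Theta_t|>\frac\pi4-\frac12 m^{-\frac14}$ and $\operatorname{sgn}\Theta_t=\operatorname{sgn}Z_t$. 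This gives entry within $\frac{\pi/4}{mv\sin(\frac\pi4-\frac12 m^{-\frac14})}\approx\frac{1.11}{mv}$ and a total below $\frac{4}{mv}$, in line with the paper's Phase~1 bound $\frac{\sqrt2\,\pi}{3mv}$. Relatedly, your auxiliary stopping time at level $\frac\pi2-\frac14 m^{-\frac14}$ can actually be reached. Your bound only gives $|\Theta_t|\le|\Theta_{\tau_{1/m}}|+\frac12 m^{-\frac14}<\frac\pi2$, so use the exit time from $]-\frac\pi2,\frac\pi2[$ instead.

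Second, in Step 3 you assert that if $\Theta$ never reaches $0$, its distance from $0$ stays above a fixed positive amount. This is false on the half-open interval $[\tau^1,\tau[$. Already without noise, the flow $\sin\Theta_{\tau_q}=\sin\Theta_{\tau_{r_1}}\,\frac{q}{r_1}$ tends to $0$ and never vanishes. This no-sign-change scenario is precisely the paper's case (a), and there your second bullet has no zero $\xi_q$ to start from. Run the argument at a level $\delta>0$ instead:
\begin{enumerate}
\item The divergence of $\int_q^{r_1}\frac{d\rho}{\rho}$ (with $R_\tau=0$ from Step 2) shows that $|\Theta|$ drops below $\delta$ at times arbitrarily close to $\tau$.
\item After such a time, for any $t$ let $s\le t$ be the last time with $|\Theta_s|\le\delta$. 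On $[s,t]$ the drift points toward $0$, so $|\Theta_t|\le\delta+\sigma|B_t-B_s|\le\delta+m^{-\frac14}$.
\item Then let $\delta\downarrow 0$.
\end{enumerate}
This treats the paper's cases (a) and (b) together and needs no Gronwall comparison.
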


\begin{proof}
Without loss of generality (see also first paragraph of the proof of Theorem \ref{Solution_pn}), we restrict to the case $N=2$.

Recall that on $J_{0,m}$,
$\Theta_{\tau_{\frac 1m}} \in \left] -\pim + \tfrac{m^{-\frac 14}}{2}, \pim-\tfrac{m^{-\frac 14}}{2}\right[$.

\medskip
\noindent\textit{Bootstrap setup.}
We first set
\begin{equation}\label{tau_star}
\tau^{*} := \tau \wedge \left(\tau_{\frac 1m} + \frac{4}{mv}\right),
\end{equation}
and carry out all estimates below on $[\tau_{\frac 1m}, \tau^{*}]$. On this interval we are 
inside the window of $H^1_m$, so
\begin{equation}\label{Hm_increment}
\sigma\, |B_s - B_{\tau_{\frac 1m}}| \leq \frac{m^{-\frac 14}}{2} 
\qquad \text{for all } s \in [\tau_{\frac 1m}, \tau^{*}].
\end{equation}
The estimates of Phases~1 and~2 will yield $\tau^{*} - \tau_{\frac 1m} < \frac{4}{mv}$ 
strictly; hence the minimum in \eqref{tau_star} is not attained at the cap, so 
$\tau^{*} = \tau$ and \eqref{Hm_increment} holds on all of $[\tau_{\frac 1m}, \tau]$. 
In particular every a~priori increment bound used below is a consequence of 
\eqref{Hm_increment}.

We split the proof into two phases.
\smallskip
\begin{enumerate}
\item 
\textit{Phase 1: escape from $|\t| > \piq$.}
Suppose $\Theta_{\tau_{\frac 1m}} \in [\piq, \pim - \tfrac{ m^{-\frac{1}{4}}}{2}[$.
The case $|\Theta_{\tau_{\frac 1m}}| \in [0, \piq[$ skips Phase~1, and the case 
$\Theta_{\tau_{\frac 1m}} \in ]-\pim + \tfrac{ m^{-\frac{1}{4}}}{2}, -\piq ]$ is symmetric.

Define $\tau^1 := \inf \{t\geq \tau_{\frac 1m} : \, \T_t \leq \piq\} \wedge \tau^{*}$.
For $s\in [\tau_{\frac 1m}, \tau^1]$, since $H^1_m \cap L^1_{0,\ve, m}$ occurs, 
$\T_s \in [\piq, \pim-\ve[$, which gives $\sin \T_s \geq \sin \tfrac \pi4 = \tfrac{1}{\sqrt 2}$, 
and $R_s \leq \tfrac 1m$ since $R$ is non-increasing from $R_{\tau_{\frac 1m}} = \tfrac1m$.
Using these together with \eqref{Hm_increment},
\begin{align}\label{t_bound0}
\Theta_{\tau^1} 
&= \Theta_{\tau_{\frac 1m}} - \int_{\tau_{\frac 1m}}^{\tau^1} \frac{v  \sin \Theta_s}{R_s} \, ds 
+ \sigma (B_{\tau^1} - B_{\tau_{\frac 1m} })  \\
& \leq  \Theta_{\tau_{\frac 1m}} - \frac{mv}{\sqrt 2}(\tau^1 - \tau_{\frac 1m}) 
+  \frac{m^{-\frac{1}{4}}}{2}.
\end{align}
Since $\Theta_{\tau^1} \geq \piq$ and $\Theta_{\tau_{\frac 1m}} - \piq < \piq$, rearranging gives
\begin{equation}\label{t1_bound}
\tau^1 - \tau_{\frac 1m} 
\leq \frac{\sqrt 2}{mv}\left(\Theta_{\tau_{\frac 1m}} - \piq + \frac{ m^{-\frac 14}}{2}\right)
\leq \frac{\sqrt 2}{mv}\left(\frac{\pi}{4} + \frac{ m^{-\frac 14}}{2}\right) 
< \frac{\sqrt 2}{mv} \cdot \frac{\pi}{3}
\end{equation}
for $m > M_0$, with $M_0$ large enough.

\smallskip

\item 
\textit{Phase 2: containment in $[-\piq, \piq]$.}

By Phase~1, $\T_{\tau^1} \in [-\piq,\piq]$. Let $r_1 := R_{\tau^1}$. Since $R$ is 
non-increasing on $[\tau^1, \tau^{*}]$, the radii attained there are exactly 
$q \in [R_{\tau^{*}}, r_1]$, and for such $q$ we have $\tau_q \in [\tau^1, \tau^{*}]$, 
so \eqref{Hm_increment} applies at $\tau_q$. For $q \in [R_{\tau^{*}}, r_1]$,
\begin{equation}\label{336q}
\T_{\tau_q} = \T_{\tau^1} + \int_{\tau^1}^{\tau_q} \mu_2(R_s,\T_s)\, ds + \s (B_{\tau_q} - B_{\tau^1}).
\end{equation}
On $[R_{\tau^{*}}, r_1]$ the map $q \mapsto \tau_q$ is a diffeomorphism onto 
$[\tau^1, \tau^{*}]$; since $R_{\tau_q} = q$, the Leibniz rule gives
\begin{equation}\label{tau_diff_1}
\frac{d R_{\tau_q}}{d q} 
= -v \cos(\T_{\tau_q}) \frac{d\tau_q}{dq} = 1,
\qquad\text{so}\qquad
\frac{d\tau_q}{dq} =-\frac{1}{v\, \cos(\T_{\tau_q}) },
\end{equation}
which is well-defined on $L^1_{0,\ve,m}$ because $\cos(\T_{\tau_q}) \neq 0$. Writing 
$\tau^\prime_q := \tfrac{d \tau_q}{dq}$ and changing variables $s = \tau_\rho$ in \eqref{336q},
\begin{equation}\label{eq_int}
\T_{\tau_q} = \T_{\tau^1} + \int_{r_1}^{q} \mu_2(R_{\tau_\rho},\T_{\tau_\rho}) \tau^\prime_\rho\, d\rho 
+ \s (B_{\tau_q} - B_{\tau^1}).
\end{equation}

By \eqref{Hm_increment}, for $q \in [R_{\tau^{*}}, r_1]$,
\begin{equation}\label{B_phase2}
|\s(B_{\tau_q} - B_{\tau^1})| 
\leq \sigma|B_{\tau_q} - B_{\tau_{\frac 1m}}| + \sigma|B_{\tau^1} - B_{\tau_{\frac 1m}}| 
\leq m^{-\frac 14}.
\end{equation}

Let $\tau^0_{r_1} := \inf \{t > \tau^1 : \T_{t} = 0 \}$.
\smallskip

\begin{enumerate}

\item 
\textit{Case $\tau^0_{r_1} \geq \tau^{*}$.}
Then $\T_t$ does not change sign on $[\tau^1,\tau^{*}]$. Assume 
$\T_{\tau^1} \in ]0, \piq]$ (the case $\T_{\tau^1} \in [-\piq, 0]$ is symmetric). From \eqref{eq_int}, using 
$R_{\tau_\rho} = \rho$ and \eqref{tau_diff_1}, for $q \in [R_{\tau^{*}}, r_1]$,
\begin{align}\label{eq_34}
\T_{\tau_q} 
&= \T_{\tau^1} - \int_{q}^{r_1} \frac{v\sin(\T_{\tau_\rho})}{\rho} \cdot \frac{1}{v\, \cos(\T_{\tau_\rho})}\, d\rho 
+ \s(B_{\tau_q} - B_{\tau^1})\\
&= \T_{\tau^1} - \int_{q}^{r_1} \frac{\tan(\T_{\tau_\rho})}{\rho}\, d\rho + \s(B_{\tau_q} - B_{\tau^1})\\
&\leq \T_{\tau^1} - \int_{q}^{r_1} \frac{\T_{\tau_\rho}}{\rho}\, d\rho + \s(B_{\tau_q} - B_{\tau^1}),
\end{align}
where the last step uses $\tan x \geq x$ on $[0,\piq]$. By \eqref{B_phase2},
\begin{equation}\label{eq_t3}
\T_{\tau_q} \leq  \T_{\tau^1} - \int_{q}^{r_1} \frac{\T_{\tau_\rho}}{\rho}\, d\rho + m^{-\frac 14},
\end{equation}
and Gronwall's inequality gives, for $q \in [R_{\tau^{*}}, r_1]$,
\begin{equation}\label{eq_gronwall}
\T_{\tau_q} \leq \bigl(\T_{\tau^1} + m^{-\frac 14}\bigr)\frac{q}{r_1} 
\leq \piq\,\frac{q}{r_1} + m^{-\frac 14}.
\end{equation}

\smallskip

\item 
\textit{Case $\tau^0_{r_1} < \tau^{*}$.}
Here $\T_t$ returns to $0$ before $\tau^{*}$, so $[\tau^1,\tau^{*}]$ decomposes into 
excursions of $\T$ away from $0$. Set $C := \{l \in [R_{\tau^{*}},r_1] :\, \T_{\tau_l} = 0 \}$. 
Fix any $q \in [R_{\tau^{*}},r_1] \setminus C$ and let 
$\xi_q := \inf \{ l \geq q :\, \T_{\tau_l} = 0 \}$. Then $\T_{\tau_\rho}$ keeps a constant 
sign on $[q,\xi_q[$ and $\T_{\tau_{\xi_q}} = 0$. Assume $\T_{\tau_q} > 0$ (the case 
$\T_{\tau_q} < 0$ is symmetric). By the same computation that gave \eqref{eq_t3}, applied 
on $[q,\xi_q]$ with $\T_{\tau_{\xi_q}} = 0$,
\begin{equation}\label{eq_t4}
\T_{\tau_q} \leq - \int_{q}^{\xi_q} \frac{\T_{\tau_\rho}}{\rho}\, d\rho + m^{-\frac 14} 
\leq  m^{-\frac 14}.
\end{equation}
Every $q \in [R_{\tau^{*}},r_1]$ either belongs to $C$, where $\T_{\tau_q} = 0$, or to an 
excursion, where \eqref{eq_t4} applies; in either case
\begin{equation}\label{eq_excursion_bound}
|\T_{\tau_q}| \leq m^{-\frac 14} \qquad \text{for all } q \in [R_{\tau^{*}},r_1].
\end{equation}

\end{enumerate}

Combining \eqref{eq_gronwall} and \eqref{eq_excursion_bound}, for all $q \in [R_{\tau^{*}},r_1]$
\begin{equation}\label{eq_uniform}
|\T_{\tau_q}| \leq \piq\,\frac{q}{r_1} + m^{-\frac 14}.
\end{equation}

\smallskip
\noindent\textit{Closing the bootstrap: $\tau^\star = \tau$.}

By \eqref{eq_uniform}, $\Theta_t \leq \piq + m^{-\frac 14} \leq \tfrac\pi3$ for $m\geq M_0$ large enough and for all 
$\tau_q \in [\tau^1, \tau^{*}]$ (i.e. $q \in [R_{\tau^{*}}, r_1]$), with no assumption 
on $\tau^{*}$. Hence $\frac{dR_t}{dt} = \mu_1(\Theta_t) = -v\cos\Theta_t \leq -\tfrac v2$ 
on $[\tau^1, \tau^{*}]$, so for $t \in [\tau^1, \tau^{*}]$,
\begin{equation}\label{t2_bound}
R_t \leq R_{\tau^1} - \frac v2 (t - \tau^1) \leq \frac1m - \frac v2 (t - \tau^1).
\end{equation}
Set $t_0 := \tau^1 + \frac{2}{mv}$, the time at which the right-hand side vanishes. 
By \eqref{t1_bound},
\begin{equation}\label{t0_window}
t_0 - \tau_{\frac 1m} = (\tau^1 - \tau_{\frac 1m}) + \frac{2}{mv} 
< \frac{\sqrt 2\,\pi}{3mv} + \frac{2}{mv} < \frac{4}{mv},
\end{equation}
so $t_0 < \tau_{\frac 1m} + \frac{4}{mv}$. 

We claim $\tau \leq t_0$. If instead 
$\tau > t_0$, then  $t_0 < \tau \wedge 
(\tau_{\frac 1m} + \frac{4}{mv}) = \tau^{*}$ by \eqref{t0_window} and $R_t > 0$ on $[\tau^1, t_0]$; thus \eqref{t2_bound} 
applies at $t_0$, giving $R_{t_0} \leq \frac1m - \frac v2 \cdot \frac{2}{mv} = 0$, 
contradicting $R_{t_0} > 0$. Hence $\tau \leq t_0$, and by \eqref{t0_window}
\begin{equation}\label{tau_bound1}
\tau - \tau_{\frac 1m} \leq t_0 - \tau_{\frac 1m}  < \frac{4}{mv}.
\end{equation}
In particular $\tau < \tau_{\frac 1m} + \frac{4}{mv}$, so $\tau^{*} = \tau$ and 
$R_{\tau^{*}} = R_\tau = 0$.

\smallskip
\noindent\textit{Conclusion of \eqref{lim_sup}.}
Now that $\tau^{*} = \tau$, the uniform bound \eqref{eq_uniform} holds for all $q \in [0, r_1]$ 
along $[\tau^1, \tau]$. Since $R_\tau = 0$, as $t \nearrow \tau$ we have 
$q = R_{\tau_q} \to 0$, so $\tfrac{q}{r_1} \to 0$ and
\begin{equation}\label{eq_c_1}
\limsup_{t\nearrow \tau} \sup_{s\geq t} |\T_s| \leq m^{-\frac 14},
\end{equation}
which is \eqref{lim_sup}.
\end{enumerate}
\end{proof}

\addcontentsline{toc}{section}{Bibliography}
\bibliographystyle{plain}
\bibliography{biblioCarlo}

\end{document}